\DeclareMathOperator{\sgn}{sgn}
\title{Certain properties of continuous fractional wavelet transform on Hardy space and Morrey space}
\author{Amit K. Verma$^a$\thanks{Corresponding author email: akverma@iitp.ac.in}, Bivek Gupta$^b$\\{\small\textit{$^{a,b}$ Department of Mathematics, IIT Patna, Bihta, Patna 801103.}}}
\theoremstyle{definition}
\newtheorem{defn}{Definition}[section]
\newtheorem{lemma}{Lemma}[section]
\newtheorem{remark}{Remark}[section]
\newtheorem{theorem}{Theorem}[section]
\newtheorem{corollary}{Corollary}[section]
\begin{document}
\maketitle

\begin{abstract}
In this paper we define a new class of continuous fractional wavelet transform (CFrWT) and study its properties in Hardy space and Morrey space. The theory developed generalize and complement some of already existing results.
\end{abstract}
{\textit{Keywords}:} Fractional Fourier Transform; Continuous Fractional Wavelet Transform; Hardy Space; Morrey Space\\
{\textit{AMS Subject Classification}:} 42B10, 42C40, 46E30 
\section{Introduction}
Even though the classical wavelet transform (WT) serves as a powerful tool in signal processing and analysis, its analyzing capability is limited to the time-frequency plane. Fractional Fourier transform (FrFT)(\cite{almeida1994fractional},\cite{mcbride1987namias},\cite{namias1980fractional}) gives the fractional Fourier domain (FrFD) frequency content of the signal, but it fails in giving the local information of the signal. Mendlovic et al. (\cite{mendlovic1997fractional}), first introduced the FrWT to deal with the optical signals. They first derive the fractional spectrum of the signal by using the FrFT and performed the WT of the fractional spectrum. But the transform defined in such a way, fails in giving the information about the local property of the signal, since the FrFT gives the fractional frequency of the signal during the entire duration of the signal rather than for a particular time, and the fractional spectrum of the signal cannot be ascertained when those fractional frequencies exist.

The novel Fractional wavelet transform (FrWT) based on fractional convolution was proposed by Shi et al. (\cite{shi2012novel}). They studied basic properties of the FrWT like inner product theorem, Parseval's relation and inversion formula for the function in $L^2(\mathbb{R}).$ Prasad et al. (\cite{prasad2014generalized}) studied some properties of FrFT such as Riemann-Lebesgue lemma. Also, they extended  the inner product theorem of the CFrWT, studied in \cite{shi2012novel}, in the context of two fractional wavelets. Dai et al. (\cite{dai2017new}) proposed a new type of FrWT and obtained the associated multiresolution analysis (MRA). This is more general than the transforms defined in \cite{prasad2014generalized} and \cite{shi2012novel}. It displays the time and FrFD-frequency information jointly in the time-FrFD-frequency plane.

Luchko et al. (\cite{luchko2008fractional}) introduced a new FrFT and implemented this theory on the Lizorkin space, and also discussed many important results  involving fractional derivatives. To know more about the FrFT reader may follow \cite{kilbas2010fractional},\cite{srivastava2017family}. In (\cite{srivastava2019certain,upadhyayCFrWT}), authors studied the new theory of FrWT, associated with the FrFT given in \cite{kilbas2010fractional, srivastava2017family, luchko2008fractional}, and obtained some of its properties like inner product relation, inversion formula, etc. They also discussed MRA associated with this FrWT, along with the construction of the orthogonal fractional wavelets. This theory can also be used in the study of quantum mechanics, signal processing and other areas of science and engineering.

Several important function spaces like Besov, Sobolev, Holder, Zygmund, BMO, etc are given characterization in terms of wavelets involved in the classical WT (\cite{daubechies1992ten},\cite{meyer1992wavelets}). WT has also been studied in various function spaces and the spaces of distributions (\cite{rieder1990wavelet},\cite{pathak2004wavelet},\cite{pathak2007boundedness}). Chuong et al. \cite{chuong2013boundedness} studied the boundedness of the WT on the Besov, BMO and Hardy spaces. Furthermore, for the compactly supported basic wavelet, the boundedness of the WT is also established on the weighted Besov space and weighted BMO space associated with the tempered weight function. In the recent years, Prasad and Kumar (\cite{prasad2015continuous},\cite{prasad2016continuous},\cite{prasad2016fractional}) discussed the CFrWT on the  generalized weighted Sobolev spaces and some function spaces and obtained its boundedness. Not only that, the WT and CFrWT have also been studied by many authors on some spaces of test functions, like Gelfand-Shilov spaces (\cite{pathak2006wavelet},\cite{pilipovic2016wavelet},\cite{prasad2012fractional},\cite{prasad2013fractional}). Based on the convolution of linear canonical transform (LCT)(\cite{wei2014generalized}), Guo et al. (\cite{guo2018linear}) proposed a linear canonical wavelet transform (LCWT), which is a generalization of the transform studied in \cite{prasad2014generalized}. The authors also proved the continuity of this transform on some space of test functions and the generalized Sobolev space. To know more about the literature, reader can read the references and the references therein.

Motivated by above works we have studied the CFrWT. We complement the theory of CFrWT studied in (\cite{srivastava2019certain, upadhyayCFrWT}) by adding some new results and  studying its properties in Hardy and Morrey spaces. We present the orthogonality relation which helps us to  conclude that the images of the CFWTs associated with two different fractional wavelets are orthogonal if the respective argument functions are orthogonal. Also, we establish the reconstruction formula and the characterization of the range of CFrWT based on two fractional wavelets.  Moreover, we derive the formulas for the CFrWT associated with  the convolution and correlation of two functions. Furthermore,  we  study the boundedness of the CFrWT on $H^1(\mathbb{R})~ \mbox{and}~L_M^{1,\nu}(\mathbb{R})$  and also study the dependence of the CFrWT on its wavelet and the argument function by determining the $H^1(\mathbb{R})$ and $L_M^{1,\nu}(\mathbb{R})-$distance of two CFrWTs with different fractional wavelets and argument functions.

The organization of the paper is as follows:
In section 2, we recall some basic definitions and results. In section 3, we have derived the orthogonality relation, the reconstruction formula and characterized the range of the transform in the context of two fractional wavelets. Also, we have derived the formulas for the CFrWT when the argument function or fractional wavelet is a convolution or correlation of two functions. Section 4 is further divided into two subsections. In each of these two subsections  the boundedness of CFrWT on Hardy space $H^1(\mathbb{R})$ and Morrey space $L_M^{1,\nu}(\mathbb{R})$ along with its approximation properties are studied. Finally, we end this paper by the conclusions in section 5.
\section{Preliminaries}
In this section we recall some existing definitions and results that we be used in this paper.

\begin{defn}\label{P2defn2.1}
 The convolution of complex-valued measurable functions $f$ and $g$ defined on $\mathbb{R},$ is given by
\begin{equation}
(f\star g)(x)=\int_{\mathbb{R}}f(u)g(x-u)du,~x\in\mathbb{R}
\end{equation}
whenever the integral is well-defined.
\end{defn}
\begin{defn}\label{P2defn2.2}
The correlation of complex-valued measurable functions $f$ and $g$ defined on $\mathbb{R},$ is given by
\begin{equation}(f\circ g)(x)=\int_{\mathbb{R}}\overline{f(u)}g(x+u)du,~x\in\mathbb{R}
\end{equation} 
whenever the integral is well-defined.
\end{defn}

\begin{defn}\label{P2defn2.3}
\cite{srivastava2019certain} The fractional Fourier transform (FrFT), of real order $\theta~(0<\theta\leq 1),$ of a function $f\in L^2(\mathbb{R})$ is defined by 
\begin{equation}\label{P2eqndef}
(\mathfrak{F}_\theta f)(\xi)=\int_{\mathbb{R}}e^{-i(\sgn\xi)|\xi|^{\frac{1}{\theta}}t}f(t)dt,~\xi\in\mathbb{R}.
\end{equation}
For $\theta=1,$ the fractional Fourier transform defined in (\ref{P2eqndef}) reduces to the classical Fourier transform.\\
The corresponding inverse fractional Fourier transform is defined as follows:
$$f(t)=\frac{1}{2\pi\theta}\int_{\mathbb{R}}e^{i(\sgn\xi)|\xi|^{\frac{1}{\theta}}t}(\mathfrak{F}_\theta f)(\xi)|\xi|^{\frac{1}{\theta}-1}d\xi.$$
\end{defn}
\begin{lemma}\label{P2lemma2.1}
Let $\psi\in L^2(\mathbb{R}),$ then
\begin{equation}\label{P2FrFTofDW}
(\mathfrak{F}_\theta\psi_{a,b,\theta})(\xi)=|a|^{\frac{1}{2\theta}}e^{-i (\sgn\xi)|\xi|^{\frac{1}{\theta}}b}(\mathfrak{F}_\theta\psi)(a\xi),
\end{equation}
where
 \begin{equation}\label{P2eqn14.}
\psi_{a,b,\theta}(t)=\frac{1}{|a|^{\frac{1}{2\theta}}}\psi\left(\frac{t-b}{(\sgn a)|a|^{\frac{1}{\theta}}}\right),~a,b\in\mathbb{R}.
\end{equation}
\end{lemma}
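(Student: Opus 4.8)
The plan is to compute the defining integral of the FrFT directly on the dilated–translated wavelet $\psi_{a,b,\theta}$ and to reduce it to $(\mathfrak{F}_\theta\psi)(a\xi)$ by a single change of variable. Substituting the definition (\ref{P2eqn14.}) of $\psi_{a,b,\theta}$ into (\ref{P2eqndef}) gives
$$(\mathfrak{F}_\theta\psi_{a,b,\theta})(\xi)=\frac{1}{|a|^{\frac{1}{2\theta}}}\int_{\mathbb{R}}e^{-i(\sgn\xi)|\xi|^{\frac{1}{\theta}}t}\psi\left(\frac{t-b}{(\sgn a)|a|^{\frac{1}{\theta}}}\right)dt,$$
and the whole argument rests on the substitution $u=\dfrac{t-b}{(\sgn a)|a|^{1/\theta}}$, i.e.\ $t=(\sgn a)|a|^{1/\theta}u+b$.

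The first step is to carry out this substitution carefully with respect to the sign of $a$. Writing $c=(\sgn a)|a|^{1/\theta}$, we have $dt=c\,du$ and $|c|=|a|^{1/\theta}$; because the orientation of the integration domain reverses exactly when $c<0$, the signed factor $c$ coming from $dt$ combines with the reversal of limits to produce the absolute factor $|c|=|a|^{1/\theta}$, so that the integral over $\mathbb{R}$ in $t$ becomes an integral over $\mathbb{R}$ in $u$ carrying the Jacobian $|a|^{1/\theta}$.

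Next I would split the exponential using $t=cu+b$ to factor out the translation term $e^{-i(\sgn\xi)|\xi|^{1/\theta}b}$, which is independent of $u$, leaving
$$|a|^{\frac{1}{\theta}-\frac{1}{2\theta}}e^{-i(\sgn\xi)|\xi|^{\frac{1}{\theta}}b}\int_{\mathbb{R}}e^{-i(\sgn\xi)|\xi|^{\frac{1}{\theta}}cu}\psi(u)\,du.$$
The prefactor simplifies at once to $|a|^{1/(2\theta)}$. The remaining step—which I regard as the only genuinely nontrivial point—is to recognize the surviving integral as $(\mathfrak{F}_\theta\psi)(a\xi)$. For this one checks the exponent identity $(\sgn\xi)|\xi|^{1/\theta}c=\sgn(a\xi)\,|a\xi|^{1/\theta}$, which follows from multiplicativity of the sign, $(\sgn\xi)(\sgn a)=\sgn(a\xi)$, together with $|\xi|^{1/\theta}|a|^{1/\theta}=(|a||\xi|)^{1/\theta}=|a\xi|^{1/\theta}$. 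Substituting $a\xi$ into the definition (\ref{P2eqndef}) then matches the integrand exactly, and collecting everything yields (\ref{P2FrFTofDW}).

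The main obstacle will not be the computation but the bookkeeping of signs: one must confirm that the nonlinear map $\xi\mapsto(\sgn\xi)|\xi|^{1/\theta}$ interacts correctly with the scaling by $a$, and in particular that the $\sgn a$ built into the dilation in (\ref{P2eqn14.}) is precisely what is needed to turn the dilation of the argument into the clean scaling $\xi\mapsto a\xi$ of the FrFT. Since $\psi\in L^2(\mathbb{R})$ and the exponentials are bounded in modulus, all integrals are well-defined in the appropriate sense, so no convergence issue arises beyond the standard interpretation of the transform.
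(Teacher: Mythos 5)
Your computation is correct: the substitution $u=(t-b)/\bigl((\sgn a)|a|^{1/\theta}\bigr)$, the Jacobian $|a|^{1/\theta}$ absorbing the orientation reversal when $a<0$, the exponent identity $(\sgn\xi)|\xi|^{1/\theta}(\sgn a)|a|^{1/\theta}=\sgn(a\xi)|a\xi|^{1/\theta}$, and the prefactor simplification $|a|^{1/\theta-1/(2\theta)}=|a|^{1/(2\theta)}$ all check out, and this direct change-of-variables argument is precisely the standard proof that the paper itself only cites (referring to \cite[Page 7]{srivastava2019certain}) rather than reproducing. The sole implicit point, shared with the paper's statement, is that $a\neq 0$ must be assumed for $\psi_{a,b,\theta}$ to be defined at all.
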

\begin{proof}
Refer \cite[Page 7]{srivastava2019certain}.
\end{proof}

\section{CFrWT} 
Before we begin with the definition of the CFrWT we recall the definition of the fractional wavelet given by Srivastava et al. (\cite{srivastava2019certain}). We then prove a theorem that helps in constructing a family of fractional wavelets from a given one.
\begin{defn}\label{defnAC}
A fractional wavelet is a non-zero function $\psi\in L^1(\mathbb{R})\cap L^2(\mathbb{R}),$ satisfying 
\begin{equation}\label{P2eqn15}
C_{\psi,\theta}:=\int_{\mathbb{R}}\frac{|\mathfrak{F}_\theta\psi(\xi)|^2}{|\xi|}d\xi<\infty.
\end{equation} 
\end{defn}
 Now, we prove the following  theorem which indicate the  construction of a family of fractional wavelets from a given one.
\begin{theorem}\label{P2theo2.5}
Let $\psi$ be a fractional wavelet and $\phi$ be a function  in $L^1(\mathbb{R}),$ then $\psi\star\phi$ and $\psi\circ\phi$ are also fractional wavelets.
\end{theorem}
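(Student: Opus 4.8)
The plan is to check the three requirements in Definition~\ref{defnAC} for $\psi\star\phi$ (and then $\psi\circ\phi$): membership in $L^1(\mathbb{R})\cap L^2(\mathbb{R})$, finiteness of the admissibility constant, and non-triviality. Membership is immediate from Young's convolution inequality: since $\psi\in L^1\cap L^2$ and $\phi\in L^1$, one gets $\|\psi\star\phi\|_1\le\|\psi\|_1\|\phi\|_1$ and $\|\psi\star\phi\|_2\le\|\psi\|_2\|\phi\|_1$, so $\psi\star\phi\in L^1\cap L^2$. The correlation is handled identically once one observes that $(\psi\circ\phi)(x)=(\tilde\psi\star\phi)(x)$ with $\tilde\psi(u)=\overline{\psi(-u)}$, after the substitution $u\mapsto-u$ in Definition~\ref{P2defn2.2}; the reflected-conjugate $\tilde\psi$ inherits $L^1\cap L^2$ membership from $\psi$.

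The heart of the argument is a convolution theorem for $\mathfrak{F}_\theta$. Writing $\omega(\xi)=(\sgn\xi)|\xi|^{\frac{1}{\theta}}$, the FrFT is just the ordinary exponential integral $\int_{\mathbb{R}}e^{-i\omega(\xi)t}f(t)\,dt$ evaluated at the warped frequency $\omega(\xi)$, so for each fixed $\xi$ the standard Fubini computation yields
$$(\mathfrak{F}_\theta(\psi\star\phi))(\xi)=(\mathfrak{F}_\theta\psi)(\xi)\,(\mathfrak{F}_\theta\phi)(\xi),\qquad (\mathfrak{F}_\theta(\psi\circ\phi))(\xi)=\overline{(\mathfrak{F}_\theta\psi)(\xi)}\,(\mathfrak{F}_\theta\phi)(\xi).$$
In both cases the modulus factors as $|(\mathfrak{F}_\theta\psi)(\xi)|\,|(\mathfrak{F}_\theta\phi)(\xi)|$. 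The Fubini swap is justified since $\psi\star\phi\in L^1$ (just established) and the double integrand is absolutely integrable.

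With this factorization, the admissibility integral is controlled by extracting the sup-norm of $\mathfrak{F}_\theta\phi$. Because $\phi\in L^1$ and $|e^{-i\omega(\xi)t}|=1$, we have the crude bound $|(\mathfrak{F}_\theta\phi)(\xi)|\le\|\phi\|_1$ for every $\xi$. Hence
$$C_{\psi\star\phi,\theta}=\int_{\mathbb{R}}\frac{|(\mathfrak{F}_\theta\psi)(\xi)|^2\,|(\mathfrak{F}_\theta\phi)(\xi)|^2}{|\xi|}\,d\xi\le\|\phi\|_1^2\int_{\mathbb{R}}\frac{|(\mathfrak{F}_\theta\psi)(\xi)|^2}{|\xi|}\,d\xi=\|\phi\|_1^2\,C_{\psi,\theta}<\infty,$$
and the same estimate applies verbatim to $\psi\circ\phi$ since the moduli agree. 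This is the substantive content of the theorem.

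The one point that genuinely requires care is non-triviality: a fractional wavelet must be non-zero, yet $\psi\star\phi$ could in principle vanish identically (for instance if $\mathfrak{F}_\theta\phi$ happens to vanish wherever $\mathfrak{F}_\theta\psi$ is supported). I expect this to be the main gap in a naive argument. I would either add the standing hypothesis that $\psi\star\phi\not\equiv0$ and $\psi\circ\phi\not\equiv0$ (equivalently, that the product $\mathfrak{F}_\theta\psi\cdot\mathfrak{F}_\theta\phi$ is not a.e.\ zero), or restrict $\phi$ so that this holds. Granting non-triviality, all three defining conditions are met and the conclusion follows.
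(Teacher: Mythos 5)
Your proof follows essentially the same route as the paper's: Young's inequality gives $\psi\star\phi\in L^1(\mathbb{R})\cap L^2(\mathbb{R})$, the convolution theorem for $\mathfrak{F}_\theta$ (valid because $\mathfrak{F}_\theta$ is the ordinary Fourier integral at the warped frequency $(\sgn\xi)|\xi|^{1/\theta}$) factors the admissibility integrand, and the bound $\|\mathfrak{F}_\theta\phi\|_{L^\infty(\mathbb{R})}\leq\|\phi\|_{L^1(\mathbb{R})}$ yields $C_{\psi\star\phi,\theta}\leq\|\phi\|^2_{L^1(\mathbb{R})}C_{\psi,\theta}$, exactly as in the paper (where you correctly invoke the $L^1$-norm at the step the paper misprints as $\|\mathfrak{F}_\theta\phi\|_{L^\infty(\mathbb{R})}\leq\|\phi\|_{L^2(\mathbb{R})}$). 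Your closing caveat about non-triviality is a genuine point that the paper silently skips: Definition \ref{defnAC} requires a non-zero function, yet $\psi\star\phi$ can vanish identically (e.g.\ when $\mathfrak{F}_\theta\psi$ and $\mathfrak{F}_\theta\phi$ have disjoint supports, which is achievable with Schwartz functions satisfying the admissibility condition), so the theorem as stated does need your added hypothesis that $\mathfrak{F}_\theta\psi\cdot\mathfrak{F}_\theta\phi$ is not a.e.\ zero, making your version the more careful one.
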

\begin{proof}
Since $\psi\in L^1(\mathbb{R})\cap L^2(\mathbb{R})$ and $\phi\in L^1(\mathbb{R}),$ $\psi\star\phi\in L^1(\mathbb{R})\cap L^2(\mathbb{R}).$\\
Now,
\begin{eqnarray*}
\int_{\mathbb{R}}\frac{|\mathfrak{F}_\theta(\psi\star\phi)(\xi)|^2}{|\xi|}d\xi &=& \int_{\mathbb{R}}\frac{|(\mathfrak{F}_\theta\psi)(\xi)|^2|(\mathfrak{F}_\theta\phi)(\xi)|^2}{|\xi|}d\xi,~\mbox{since}~\mathfrak{F}_\theta(\psi\star\phi)(\xi)=  (\mathfrak{F}_\theta\psi)(\xi)(\mathfrak{F}_\theta\phi)(\xi)\\
&\leq &\|\phi\|^2_{L^1(\mathbb{R})}\int_{\mathbb{R}}\frac{|(\mathfrak{F}_\theta\psi)(\xi)|^2}{|\xi|}d\xi,~\mbox{since}~\|\mathfrak{F}_\theta \phi\|_{L^\infty(\mathbb{R})}\leq\|\phi\|_{L^2(\mathbb{R})}\\
&=& \|\phi\|^2_{L^1(\mathbb{R})}C_{\psi,\theta}.
\end{eqnarray*}
Since $\psi$ is a fractional wavelet and $\phi\in L^1(\mathbb{R}),$ 
$$\int_{\mathbb{R}}\frac{|\mathfrak{F}_\theta(\psi\star\phi)(\xi)|^2}{|\xi|}d\xi<\infty.$$
Hence by definition \ref{defnAC}, $\psi\star\phi$ is a fractional wavelet. Similarly, it can be shown that $\psi\circ\phi$ is also a fractional wavelet. This completes the proof.
\end{proof}

\begin{defn}
\cite{srivastava2019certain} The CFrWT of $f$ with respect to a fractional wavelet $\psi$ is defined by 
\begin{equation}\label{P2eqn16.}
\left(W_\psi^\theta f\right)(b,a)=\int_{\mathbb{R}}f(t)\overline{\psi_{a,b,\theta}(t)}dt,~a,b\in\mathbb{R},
\end{equation}
provided the integral is well-defined. Here $\psi_{a,b,\theta}$ is given by equation (\ref{P2eqn14.}).
\end{defn}
We derive some new results of the CFrWT and also generalized  some existing results in the context of two fractional wavelets. We omit the proof which are similar as in \cite{srivastava2019certain} and \cite{upadhyayCFrWT}.

If $f,~g\in L^2(\mathbb{R})$ are orthogonal then the image $W_\psi^\theta f$ and $W_\psi^\theta g$ are also orthogonal in $L^2\left(\mathbb{R}\times\mathbb{R},\frac{dbda}{|a|^{\frac{1}{\theta}+1}}\right)$. This fact is observed by the orthogonality relation for the CFrWT given in \cite{srivastava2019certain}. But this relation is not enough to conclude the orthogonality of $W_\psi^\theta f$ and $W_\phi^\theta g$  for two different fractional wavelets $\psi$ and $\phi$. So in this regard we introduce a more general version of orthogonality relation.  We also  derive reconstruction formula and characterized its range. For  the case $\psi=\phi$, our results coincide with the results in \cite{srivastava2019certain}.
\begin{theorem}\label{P2OR}(Orthogonality relation)
If the fractional wavelets $\phi$ and $\psi$ satisfies
\begin{equation}\label{P2eqn19}
\int_{\mathbb{R}}|(\mathfrak{F}_\theta\phi)(u)|~|(\mathfrak{F}_\theta\psi)(u)|\frac{1}{|u|}du<\infty,
\end{equation}
then for $f,g\in L^2(\mathbb{R}),$
$$\int_{\mathbb{R}}\int_{\mathbb{R}}\left(W_\phi^\theta f\right)(b,a)\overline{\left(W_\psi^\theta g\right)(b,a)}\frac{dbda}{|a|^{\frac{1}{\theta}+1}}=C_{\phi,\psi,\theta}\langle f,g\rangle_{L^2(\mathbb{R})},$$ where 
\begin{equation}\label{P2eqn20}
C_{\phi,\psi,\theta}=\int_{\mathbb{R}}\overline{(\mathfrak{F}_\theta\phi)(u)}(\mathfrak{F}_\theta\psi)(u)\frac{1}{|u|}du.
\end{equation}
\end{theorem}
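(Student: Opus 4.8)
The plan is to pass to the fractional Fourier domain and reduce the double integral to the constant $C_{\phi,\psi,\theta}$ multiplied by the ordinary Plancherel pairing of $f$ and $g$. First I would record the Parseval relation for the FrFT, which follows at once from the inversion formula in Definition \ref{P2defn2.3}:
\[
\int_{\mathbb{R}}F(t)\overline{G(t)}\,dt=\frac{1}{2\pi\theta}\int_{\mathbb{R}}(\mathfrak{F}_\theta F)(\xi)\,\overline{(\mathfrak{F}_\theta G)(\xi)}\,|\xi|^{\frac{1}{\theta}-1}\,d\xi,\qquad F,G\in L^2(\mathbb{R}).
\]
Writing $(W_\phi^\theta f)(b,a)=\langle f,\phi_{a,b,\theta}\rangle_{L^2(\mathbb{R})}$ and applying this relation together with Lemma \ref{P2lemma2.1} gives
\[
(W_\phi^\theta f)(b,a)=\frac{|a|^{\frac{1}{2\theta}}}{2\pi\theta}\int_{\mathbb{R}}(\mathfrak{F}_\theta f)(\xi)\,\overline{(\mathfrak{F}_\theta\phi)(a\xi)}\,e^{i(\sgn\xi)|\xi|^{\frac{1}{\theta}}b}\,|\xi|^{\frac{1}{\theta}-1}\,d\xi,
\]
and the analogous expression for $(W_\psi^\theta g)(b,a)$.

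The next step is to insert these into the left-hand side and carry out the $b$-integration first. The substitution $\omega=(\sgn\xi)|\xi|^{\frac{1}{\theta}}$ recasts $\mathfrak{F}_\theta$ as the classical Fourier transform and absorbs the weight $|\xi|^{\frac{1}{\theta}-1}$, so that, for each fixed $a$, the map $b\mapsto(W_\phi^\theta f)(b,a)$ is, up to the factor $|a|^{\frac{1}{2\theta}}$, a genuine inverse Fourier transform of an $L^2$ function (the $L^2$ membership being secured by $\mathfrak{F}_\theta\phi\in L^\infty$, since $\phi\in L^1(\mathbb{R})$). The classical Plancherel theorem in the variable $b$ then pairs the two transforms and collapses the two frequency variables to one; informally this is the identity $\int_{\mathbb{R}}e^{ib(\omega-\omega')}\,db=2\pi\delta(\omega-\omega')$, but rigorously it is just Plancherel for the ordinary Fourier transform. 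Returning to the original frequency variable $\eta$, the outcome is
\[
\int_{\mathbb{R}}(W_\phi^\theta f)(b,a)\,\overline{(W_\psi^\theta g)(b,a)}\,db=\frac{|a|^{\frac{1}{\theta}}}{2\pi\theta}\int_{\mathbb{R}}(\mathfrak{F}_\theta f)(\eta)\,\overline{(\mathfrak{F}_\theta g)(\eta)}\,\overline{(\mathfrak{F}_\theta\phi)(a\eta)}\,(\mathfrak{F}_\theta\psi)(a\eta)\,|\eta|^{\frac{1}{\theta}-1}\,d\eta.
\]

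Finally I would divide by $|a|^{\frac{1}{\theta}+1}$ (leaving a factor $|a|^{-1}$) and integrate in $a$. After interchanging the $a$- and $\eta$-integrations, the inner $a$-integral is $\int_{\mathbb{R}}\overline{(\mathfrak{F}_\theta\phi)(a\eta)}\,(\mathfrak{F}_\theta\psi)(a\eta)\,\frac{da}{|a|}$, and since $da/|a|$ is invariant under the dilation $a\mapsto a\eta$, the substitution $u=a\eta$ turns it into $\int_{\mathbb{R}}\overline{(\mathfrak{F}_\theta\phi)(u)}\,(\mathfrak{F}_\theta\psi)(u)\,\frac{du}{|u|}=C_{\phi,\psi,\theta}$, which is independent of $\eta$. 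The surviving integral $\frac{1}{2\pi\theta}\int_{\mathbb{R}}(\mathfrak{F}_\theta f)(\eta)\overline{(\mathfrak{F}_\theta g)(\eta)}|\eta|^{\frac{1}{\theta}-1}\,d\eta$ is, by the Parseval relation above, exactly $\langle f,g\rangle_{L^2(\mathbb{R})}$, which yields the asserted identity.

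The main obstacle is the rigorous justification of the two interchanges of integration, and hypothesis \eqref{P2eqn19} is precisely what is needed. Applying Tonelli to the modulus of the $a$–$\eta$ integrand and performing the scale-invariant $a$-integral first bounds it by $\left(\int_{\mathbb{R}}|(\mathfrak{F}_\theta\phi)(u)||(\mathfrak{F}_\theta\psi)(u)|\,\frac{du}{|u|}\right)\int_{\mathbb{R}}|(\mathfrak{F}_\theta f)(\eta)||(\mathfrak{F}_\theta g)(\eta)||\eta|^{\frac{1}{\theta}-1}\,d\eta$; the first factor is finite by \eqref{P2eqn19}, and the second is finite by Cauchy–Schwarz with the Parseval relation since $f,g\in L^2(\mathbb{R})$. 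Hence Fubini applies and $C_{\phi,\psi,\theta}$ is well defined and finite. The $b$-integration needs separate care, because the delta-function computation is merely formal; replacing it by the $L^2(db)$ Plancherel identity, valid once the $\omega$-substitution has exhibited both CFrWTs as honest inverse Fourier transforms, removes this difficulty.
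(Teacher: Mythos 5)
Your proposal is correct and follows essentially the same route the paper intends: it defers its proof to \cite{srivastava2019certain, upadhyayCFrWT}, where the argument is exactly this one --- represent each CFrWT in the fractional Fourier domain via the FrFT Parseval relation and Lemma \ref{P2lemma2.1}, apply Plancherel in $b$ after the substitution $\omega=(\sgn\xi)|\xi|^{\frac{1}{\theta}}$, and use the dilation invariance of $\frac{da}{|a|}$ to peel off $C_{\phi,\psi,\theta}$, with hypothesis \eqref{P2eqn19} plus Cauchy--Schwarz justifying Fubini. Your treatment is, if anything, slightly more careful than the sources on two points: you correctly use $\|\mathfrak{F}_\theta\phi\|_{L^\infty(\mathbb{R})}\leq\|\phi\|_{L^1(\mathbb{R})}$ to place $b\mapsto(W_\phi^\theta f)(b,a)$ in $L^2(\mathbb{R})$, and you make explicit that the formal delta-function collapse is rigorously just the $L^2(db)$ Plancherel identity.
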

\begin{proof}
The proof is similar as in \cite{srivastava2019certain, upadhyayCFrWT}.
\end{proof}
\begin{corollary}
Let $\phi,\psi$ be two fractional wavelets and are such that they satisfies the hypothesis of theorem \ref{P2OR}. If further $C_{\phi,\psi,\theta}=0,$ where $C_{\phi,\psi,\theta}$ is given by equation (\ref{P2eqn20}). Then  $W_\phi^\theta \left(L^2(\mathbb{R})\right)$ and $W_\psi^\theta \left(L^2(\mathbb{R})\right)$ are orthogonal.
\end{corollary}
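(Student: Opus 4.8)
The plan is to observe that this corollary is an immediate consequence of the orthogonality relation in Theorem \ref{P2OR}, once one identifies the double integral appearing there as the inner product of the ambient Hilbert space. Recall that the natural inner product on $L^2\left(\mathbb{R}\times\mathbb{R},\frac{dbda}{|a|^{\frac{1}{\theta}+1}}\right)$ is
\begin{equation*}
\langle F,G\rangle=\int_{\mathbb{R}}\int_{\mathbb{R}}F(b,a)\overline{G(b,a)}\frac{dbda}{|a|^{\frac{1}{\theta}+1}}.
\end{equation*}
The first step, therefore, is simply to note that for any $f,g\in L^2(\mathbb{R})$ the quantity on the left-hand side of the identity in Theorem \ref{P2OR} is precisely $\langle W_\phi^\theta f,\, W_\psi^\theta g\rangle$ computed in this space.

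Next I would invoke Theorem \ref{P2OR}. Since $\phi$ and $\psi$ are assumed to satisfy the hypothesis \eqref{P2eqn19}, the theorem applies and gives, for every pair $f,g\in L^2(\mathbb{R})$,
\begin{equation*}
\langle W_\phi^\theta f,\, W_\psi^\theta g\rangle=C_{\phi,\psi,\theta}\,\langle f,g\rangle_{L^2(\mathbb{R})}.
\end{equation*}
Using the additional hypothesis $C_{\phi,\psi,\theta}=0$, the right-hand side vanishes identically, so that $\langle W_\phi^\theta f,\, W_\psi^\theta g\rangle=0$ for all $f,g\in L^2(\mathbb{R})$.

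Finally, I would translate this pointwise vanishing into the statement about subspaces. By definition, every element of $W_\phi^\theta\left(L^2(\mathbb{R})\right)$ has the form $W_\phi^\theta f$ for some $f\in L^2(\mathbb{R})$, and likewise every element of $W_\psi^\theta\left(L^2(\mathbb{R})\right)$ has the form $W_\psi^\theta g$ for some $g\in L^2(\mathbb{R})$. Since every such pair is orthogonal by the previous step, each vector of the first image is orthogonal to each vector of the second image, which is exactly the assertion that the two subspaces $W_\phi^\theta\left(L^2(\mathbb{R})\right)$ and $W_\psi^\theta\left(L^2(\mathbb{R})\right)$ are orthogonal. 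I do not anticipate any genuine obstacle here: the entire content is carried by Theorem \ref{P2OR}, and the only point requiring minor care is the bookkeeping observation that orthogonality of two subspaces is equivalent to orthogonality of every pair of their respective elements, which here reduces to arbitrary generators $W_\phi^\theta f$ and $W_\psi^\theta g$.
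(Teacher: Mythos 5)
Your proposal is correct and follows exactly the paper's route: the paper proves this corollary simply by noting it follows from Theorem \ref{P2OR}, and your argument is just that deduction written out in full, identifying the double integral as the inner product on $L^2\left(\mathbb{R}\times\mathbb{R},\frac{dbda}{|a|^{\frac{1}{\theta}+1}}\right)$ and setting $C_{\phi,\psi,\theta}=0$. No gaps; your version merely makes explicit the bookkeeping the paper leaves implicit.
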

\begin{proof}
Proof follows from results of theorem \ref{P2OR}. 
\end{proof}
\begin{theorem}(Reconstruction formula)
Let $f\in L^2(\mathbb{R})$ and $\phi,\psi$ be two fractional wavelets satisfying (\ref{P2eqn19}) and  $C_{\phi,\psi,\theta},$ as defined in (\ref{P2eqn20}), is non-zero. Then
$$f(t)=\frac{1}{C_{\phi,\psi,\theta}}\int_{\mathbb{R}}\int_{\mathbb{R}}\psi_{a,b,\theta}(t)\left(W_\phi^\theta f\right)(b,a)\frac{dbda}{|a|^{\frac{1}{\theta}+1}}.$$
\end{theorem}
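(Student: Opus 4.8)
The plan is to read the stated identity as a weak ($L^2$) equality and to reduce it to the orthogonality relation of Theorem \ref{P2OR}. Write
\[
\tilde f(t)=\frac{1}{C_{\phi,\psi,\theta}}\int_{\mathbb{R}}\int_{\mathbb{R}}\psi_{a,b,\theta}(t)\left(W_\phi^\theta f\right)(b,a)\frac{dbda}{|a|^{\frac{1}{\theta}+1}}
\]
for the candidate right-hand side. Since two elements of $L^2(\mathbb{R})$ coincide as soon as they induce the same bounded conjugate-linear functional, it suffices to show $\langle \tilde f,g\rangle_{L^2(\mathbb{R})}=\langle f,g\rangle_{L^2(\mathbb{R})}$ for every $g\in L^2(\mathbb{R})$; the reconstruction formula then follows in the sense of $L^2$.

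The computation I would carry out is the following. Pairing $\tilde f$ with an arbitrary $g\in L^2(\mathbb{R})$ and interchanging the $t$-integration with the $(b,a)$-integration gives
\[
\langle \tilde f,g\rangle_{L^2(\mathbb{R})}=\frac{1}{C_{\phi,\psi,\theta}}\int_{\mathbb{R}}\int_{\mathbb{R}}\left(W_\phi^\theta f\right)(b,a)\left(\int_{\mathbb{R}}\psi_{a,b,\theta}(t)\overline{g(t)}\,dt\right)\frac{dbda}{|a|^{\frac{1}{\theta}+1}}.
\]
By the definition of the CFrWT in (\ref{P2eqn16.}), the inner $t$-integral is exactly $\overline{(W_\psi^\theta g)(b,a)}$, so the right-hand side becomes $\frac{1}{C_{\phi,\psi,\theta}}\iint \left(W_\phi^\theta f\right)(b,a)\overline{\left(W_\psi^\theta g\right)(b,a)}\,\frac{dbda}{|a|^{1/\theta+1}}$. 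Applying the orthogonality relation (Theorem \ref{P2OR}) evaluates this double integral as $C_{\phi,\psi,\theta}\langle f,g\rangle_{L^2(\mathbb{R})}$, and cancelling the nonzero constant $C_{\phi,\psi,\theta}$ yields $\langle \tilde f,g\rangle_{L^2(\mathbb{R})}=\langle f,g\rangle_{L^2(\mathbb{R})}$. As $g$ is arbitrary, this proves $\tilde f=f$.

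The step I expect to be the main obstacle is the interchange of the order of integration, because the full triple integral in $t,b,a$ is \emph{not} absolutely convergent in general: integrating $|\psi_{a,b,\theta}(t)|^2$ in $b$ returns $\|\psi\|_{L^2(\mathbb{R})}^2$, and the residual factor $\int \frac{da}{|a|^{1/\theta+1}}$ diverges at both ends, so a naive Fubini estimate fails. To make the argument rigorous I would truncate the scale-translation plane to the bounded regions $A_R=\{(b,a):\,1/R\le|a|\le R,\ |b|\le R\}$. On each $A_R$ the weight $|a|^{-(1/\theta+1)}$ is bounded, $|(W_\phi^\theta f)(b,a)|\le\|f\|_{L^2(\mathbb{R})}\|\phi\|_{L^2(\mathbb{R})}$ by Cauchy–Schwarz, and $\int_{\mathbb{R}}|\psi_{a,b,\theta}(t)\,\overline{g(t)}|\,dt\le\|\psi\|_{L^2(\mathbb{R})}\|g\|_{L^2(\mathbb{R})}$, so the integrand is absolutely integrable over $A_R\times\mathbb{R}$ and Fubini applies there. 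Passing to the limit $R\to\infty$ is then legitimate: both $W_\phi^\theta f$ and $W_\psi^\theta g$ lie in $L^2\!\left(\mathbb{R}\times\mathbb{R},\frac{dbda}{|a|^{1/\theta+1}}\right)$ (as guaranteed by Theorem \ref{P2OR} applied with a single wavelet), so the integrand $(W_\phi^\theta f)\,\overline{(W_\psi^\theta g)}\,|a|^{-(1/\theta+1)}$ is dominated by an integrable majorant, and dominated convergence sends the truncated integrals to the full one $C_{\phi,\psi,\theta}\langle f,g\rangle_{L^2(\mathbb{R})}$. This truncation-and-limit device is precisely what upgrades the formal interchange into a valid one and completes the proof.
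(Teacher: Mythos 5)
Your proposal is correct and follows essentially the same route as the paper, which simply defers to \cite{srivastava2019certain, upadhyayCFrWT}, where the reconstruction formula is likewise obtained in the weak $L^2$ sense by pairing against an arbitrary $g\in L^2(\mathbb{R})$ and invoking the orthogonality relation of Theorem \ref{P2OR} before cancelling the nonzero constant $C_{\phi,\psi,\theta}$. Your truncation over $A_R=\{(b,a): 1/R\le |a|\le R,\ |b|\le R\}$ together with the Cauchy--Schwarz bounds and dominated convergence supplies the Fubini justification that the paper and its references leave implicit, so if anything your write-up is more rigorous than the source.
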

\begin{proof}
The proof is similar as in  \cite{srivastava2019certain, upadhyayCFrWT}.
\end{proof}
\begin{theorem}(Characterization of the range)
Let $C_{\phi,\psi,\theta}$ as defined in (\ref{P2eqn20}), for two fractional wavelets satisfying (\ref{P2eqn19}), is non-zero. Then $F\in L^2\left(\mathbb{R}\times\mathbb{R},\frac{dbda}{|a|^{\frac{1}{\theta}+1}}\right)$ is a CFrWT, with respect to $\phi,$ of some $f\in L^2(\mathbb{R})$ $\it{iff}$
\begin{equation}\label{P2rkhs}
F(b_0,a_0)=\int_{\mathbb{R}}\int_{\mathbb{R}}F(b,a)K_{\phi,\psi,\theta}(b_0,a_0;b,a)\frac{dbda}{|a|^{\frac{1}{\theta}+1}},~(b_0,a_0)\in\mathbb{R}\times\mathbb{R},
\end{equation}
where $K_{\phi,\psi,\theta}$ is the reproducing kernel given by
\begin{equation}\label{P2RKHS}
K_{\phi,\psi,\theta}(b_0,a_0;b,a)=\frac{1}{C_{\phi,\psi,\theta}}\int_{\mathbb{R}}\psi_{a,b,\theta}(t)\overline{\phi_{a_0,b_0,\theta}(t)}dt.
\end{equation}
Moreover, in such a case the kernel is pointwise bounded:
$$|K_{\phi,\psi,\theta}(b_0,a_0;b,a)|\leq\frac{1}{C_{\phi,\psi,\theta}}\|\phi\|_{L^2(\mathbb{R})}\|\psi\|_{L^2(\mathbb{R})}.$$ 
\end{theorem}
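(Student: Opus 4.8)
The plan is to prove both implications of the biconditional by leaning on the two results already established: the reconstruction formula and the orthogonality relation (Theorem~\ref{P2OR}). The computational engine in both directions is Fubini's theorem, which lets me exchange the $t$-integration against the $(b,a)$-integration. At the outset I would record the dilation-translation invariance of the $L^2$ norm, namely $\|\psi_{a,b,\theta}\|_{L^2(\mathbb{R})} = \|\psi\|_{L^2(\mathbb{R})}$ (and likewise for $\phi$), which follows from the substitution $u = (t-b)/((\sgn a)|a|^{1/\theta})$ in the definition~(\ref{P2eqn14.}); this identity is what will later produce the pointwise kernel estimate.

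For the necessity (``only if'') direction, suppose $F = W_\phi^\theta f$ for some $f \in L^2(\mathbb{R})$. Writing $F(b_0,a_0) = \int_{\mathbb{R}} f(t)\overline{\phi_{a_0,b_0,\theta}(t)}\,dt$ and inserting the reconstruction formula for $f(t)$ expressed through $W_\phi^\theta f = F$, I would interchange the order of integration to pull the $(b,a)$-integral outside. The residual inner $t$-integral is exactly $\int_{\mathbb{R}} \psi_{a,b,\theta}(t)\overline{\phi_{a_0,b_0,\theta}(t)}\,dt$, which upon division by $C_{\phi,\psi,\theta}$ is the kernel $K_{\phi,\psi,\theta}(b_0,a_0;b,a)$ of~(\ref{P2RKHS}). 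This yields the integral equation~(\ref{P2rkhs}) at once.

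For the sufficiency (``if'') direction, suppose $F \in L^2(\mathbb{R}\times\mathbb{R},\frac{dbda}{|a|^{1/\theta+1}})$ satisfies~(\ref{P2rkhs}). I would define the candidate
$$f(t) := \frac{1}{C_{\phi,\psi,\theta}}\int_{\mathbb{R}}\int_{\mathbb{R}}\psi_{a,b,\theta}(t)F(b,a)\frac{dbda}{|a|^{1/\theta+1}},$$
which is, up to the scalar, the synthesis operator (the adjoint of $W_\psi^\theta$) applied to $F$. That $f \in L^2(\mathbb{R})$ follows because $W_\psi^\theta$ is bounded on $L^2$: the orthogonality relation with $\phi=\psi$ gives $\|W_\psi^\theta g\|^2 = C_{\psi,\theta}\|g\|^2$, so its adjoint is bounded as well. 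Computing $(W_\phi^\theta f)(b_0,a_0) = \int_{\mathbb{R}} f(t)\overline{\phi_{a_0,b_0,\theta}(t)}\,dt$, substituting the definition of $f$, and again swapping the integrals by Fubini, I would identify the inner integral as the kernel and obtain $(W_\phi^\theta f)(b_0,a_0) = \int\int F(b,a)K_{\phi,\psi,\theta}(b_0,a_0;b,a)\frac{dbda}{|a|^{1/\theta+1}}$, which by the hypothesis~(\ref{P2rkhs}) equals $F(b_0,a_0)$. Hence $F = W_\phi^\theta f$.

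Finally, the pointwise bound is immediate from Cauchy--Schwarz applied to the $t$-integral in~(\ref{P2RKHS}), giving $|K_{\phi,\psi,\theta}(b_0,a_0;b,a)| \le \frac{1}{|C_{\phi,\psi,\theta}|}\|\psi_{a,b,\theta}\|_{L^2}\|\phi_{a_0,b_0,\theta}\|_{L^2} = \frac{1}{|C_{\phi,\psi,\theta}|}\|\psi\|_{L^2}\|\phi\|_{L^2}$, using the norm invariance noted above. The main obstacle throughout is the rigorous justification of the two Fubini interchanges: rather than treating them as formal, I would verify absolute integrability of each double integrand by combining Cauchy--Schwarz in the $t$-variable with the membership $F \in L^2$ of the product space and the finiteness condition~(\ref{P2eqn19}).
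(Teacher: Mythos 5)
Your overall architecture --- reconstruction formula for necessity, the synthesis/adjoint operator for sufficiency, Cauchy--Schwarz plus the invariance $\|\psi_{a,b,\theta}\|_{L^2(\mathbb{R})}=\|\psi\|_{L^2(\mathbb{R})}$ for the pointwise bound --- is the standard one, and it matches in spirit the argument the paper delegates to \cite{srivastava2019certain,upadhyayCFrWT} (the paper prints no proof of its own). But your proposed justification of the two interchanges, which you yourself flag as the main obstacle, does not work as stated. Cauchy--Schwarz in the $t$-variable only bounds the inner integral by the \emph{constant} $\frac{1}{|C_{\phi,\psi,\theta}|}\|\phi\|_{L^2(\mathbb{R})}\|\psi\|_{L^2(\mathbb{R})}$, so absolute convergence of the triple integral would then require $F\in L^1\left(\mathbb{R}\times\mathbb{R},\frac{dbda}{|a|^{\frac{1}{\theta}+1}}\right)$; this does \emph{not} follow from $F\in L^2\left(\mathbb{R}\times\mathbb{R},\frac{dbda}{|a|^{\frac{1}{\theta}+1}}\right)$, since the measure $\frac{dbda}{|a|^{\frac{1}{\theta}+1}}$ is infinite and $L^2\not\subset L^1$ there, and condition (\ref{P2eqn19}) cannot rescue this since it lives on the frequency side. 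A second, related defect in the necessity direction: you insert the reconstruction formula for $f(t)$ pointwise under the integral sign, but that formula converges only in the $L^2$ (weak) sense, not as an absolutely convergent pointwise identity, so Fubini is not even applicable in the form you invoke it.

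Both gaps are repairable within your own framework, and the repair is what the cited proofs actually do. For necessity, bypass the reconstruction formula entirely: apply the orthogonality relation (Theorem \ref{P2OR}) with $g=\phi_{a_0,b_0,\theta}\in L^2(\mathbb{R})$ and observe that $\overline{\left(W_\psi^\theta\phi_{a_0,b_0,\theta}\right)(b,a)}=C_{\phi,\psi,\theta}\,K_{\phi,\psi,\theta}(b_0,a_0;b,a)$, which yields (\ref{P2rkhs}) in one line with no interchange left to justify. For sufficiency, your adjoint framing already suffices if you lean on it honestly: with $f=\frac{1}{C_{\phi,\psi,\theta}}\left(W_\psi^\theta\right)^{*}F\in L^2(\mathbb{R})$, the computation of $(W_\phi^\theta f)(b_0,a_0)=\langle f,\phi_{a_0,b_0,\theta}\rangle_{L^2(\mathbb{R})}$ is the adjoint identity $\langle (W_\psi^\theta)^{*}F,g\rangle=\langle F,W_\psi^\theta g\rangle$ rather than Fubini, and the $(b,a)$-pairing converges absolutely by Cauchy--Schwarz \emph{in the product space}, because the orthogonality relation with $\phi=\psi$ gives $\|W_\psi^\theta\phi_{a_0,b_0,\theta}\|=\sqrt{C_{\psi,\theta}}\,\|\phi\|_{L^2(\mathbb{R})}<\infty$ --- this is the correct Cauchy--Schwarz, applied in $(b,a)$ rather than in $t$. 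Your proof of the pointwise kernel bound is correct as it stands, and your writing $|C_{\phi,\psi,\theta}|$ in the denominator quietly repairs a small slip in the theorem statement, where the possibly complex constant $C_{\phi,\psi,\theta}$ appears without a modulus.
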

\begin{proof}
The proof is similar as in \cite{srivastava2019certain, upadhyayCFrWT}.
\end{proof}
Now, we prove the theorem that gives the formula for the wavelet transform of the convolution and correlation of two functions.
\begin{theorem}
Let  $f\in L^1(\mathbb{R}),~g\in L^2(\mathbb{R})$ and $\psi$ be a fractional wavelet, then
\begin{eqnarray*}
\left(W_\psi^\theta(f\star g)\right)(b,a)&=&\left(f(\cdot)\star(W_\psi^\theta g)(\cdot,a)\right)(b)\\
\mbox{and}~ \left(W_\psi^\theta(f\circ g)\right)(b,a)&=&\left(f(\cdot)\circ(W_\psi^\theta g)(\cdot,a)\right)(b).
\end{eqnarray*}
\begin{proof}
We have
\begin{eqnarray*}
\left(W_\psi^\theta(f\star g)\right)(b,a)&=&\int_{\mathbb{R}}(f\star g)(t)\overline{\psi_{a,b,\theta}(t)}dt\\
&=&\int_{\mathbb{R}}\left\{\int_{\mathbb{R}}f(y)g(t-y)dy\right\}\frac{1}{|a|^\frac{1}{2\theta}}\overline{\psi\left(\frac{t-b}{(\sgn a)|a|^\frac{1}{\theta}}\right)}dt,~\mbox{using definition \ref{P2defn2.1}}\\
&=&\int_{\mathbb{R}}f(y)\left\{\int_{\mathbb{R}}g(t)\overline{\psi_{a,b-y,\theta}}dt\right\}dy\\
&=&\int_{\mathbb{R}}f(y)(W_\psi^\theta g)(b-y,a)dy.
\end{eqnarray*}
Therefore,
$$\left(W_\psi^\theta(f\star g)\right)(b,a)=\left(f(\cdot)\star(W_\psi^\theta g)(\cdot,a)\right)(b).$$
Similarly, it can be shown that
$$\left(W_\psi^\theta(f\circ g)\right)(b,a)=\left(f(\cdot)\circ(W_\psi^\theta g)(\cdot,a)\right)(b).$$
This completes the proof.
\end{proof}
The following theorem gives the expression of the CFrWT when the fractional wavelet associated with the transform is the convolution or the correlation of two functions.
\begin{theorem}
Let $f\in L^1(\mathbb{R})~g\in L^2(\mathbb{R})$ and $\psi$ be a fractional wavelet,  then
\begin{eqnarray*}
\left(W_{f\star\psi}^\theta g\right)(b,a)&=&\frac{1}{|a|^\frac{1}{\theta}}\left(f\left(\frac{\cdot}{(\sgn a)|a|^\frac{1}{\theta}}\right)\circ(W_\psi^\theta g)(\cdot,a)\right)(b)\\
\mbox{and}~ \left(W_{f\circ \psi}^\theta g\right)(b,a)&=&\frac{1}{|a|^\frac{1}{\theta}}\left(f\left(\frac{\cdot}{(\sgn a)|a|^\frac{1}{\theta}}\right)\star(W_\psi^\theta g)(\cdot,a)\right)(b).
\end{eqnarray*}
\end{theorem}
\begin{proof}
Since $f\in L^1(\mathbb{R})$ and $\psi$ is a fractional wavelet, by theorem \ref{P2theo2.5}, $f\star\psi$ is a wavelet.\\
Now,
\begin{eqnarray*}
\left(W_{f\star\psi}^\theta g\right)(b,a)&=&\int_{\mathbb{R}}g(t)\overline{(f\star \psi)_{a,b,\theta}(t)}dt\\
&=&\int_{\mathbb{R}}g(t)\left\{\frac{1}{|a|^\frac{1}{2\theta}}\overline{\int_{\mathbb{R}}f(y)\psi\left(\frac{t-b}{(\sgn a)|a|^\frac{1}{\theta}}-y\right)dy}\right\}dt\\
&=&\int_{\mathbb{R}}\overline{f(y)}\left\{\int_{\mathbb{R}}g(t)\frac{1}{|a|^\frac{1}{2\theta}}\overline{\psi\left(\frac{t-(b+y(\sgn a)|a|^\frac{1}{\theta})}{(\sgn a)|a|^\frac{1}{\theta}}\right)dt}\right\}dy\\
&=&\int_{\mathbb{R}}\overline{f(y)}(W_\psi^\theta g)(b+(\sgn a)|a|^\frac{1}{\theta}y,a)dy.
\end{eqnarray*}
Therefore,
$$\left(W_{f\star\psi}^\theta g\right)(b,a)=\frac{1}{|a|^\frac{1}{\theta}}\left(f\left(\frac{\cdot}{(\sgn a)|a|^\frac{1}{\theta}}\right)\circ(W_\psi^\theta g)(\cdot,a)\right)(b).$$
Again by theorem \ref{P2theo2.5}, $f\circ\psi$ is a wavelet. Proceeding similarly as above it can be shown that
$$\left(W_{f\circ \psi}^\theta g\right)(b,a)=\frac{1}{|a|^\frac{1}{\theta}}\left(f\left(\frac{\cdot}{(\sgn a)|a|^\frac{1}{\theta}}\right)\star(W_\psi^\theta g)(\cdot,a)\right)(b).$$
This completes the proof.
\end{proof}
\end{theorem}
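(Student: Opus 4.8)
The plan is to reduce both identities to the definition of the CFrWT by expanding the convolution (resp.\ correlation) that defines the new wavelet and then interchanging the order of integration. Since $f\in L^1(\mathbb{R})$ and $\psi$ is a fractional wavelet, Theorem \ref{P2theo2.5} already guarantees that $f\star\psi$ and $f\circ\psi$ are again fractional wavelets, so the left-hand sides $W_{f\star\psi}^\theta g$ and $W_{f\circ\psi}^\theta g$ are well-defined for $g\in L^2(\mathbb{R})$ and there is no domain issue to worry about.

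For the first identity I would start from $(W_{f\star\psi}^\theta g)(b,a)=\int_{\mathbb{R}}g(t)\overline{(f\star\psi)_{a,b,\theta}(t)}\,dt$ and insert the definition of $f\star\psi$ inside the dilation--translation, rewriting the inner argument as $\frac{t-b}{(\sgn a)|a|^{1/\theta}}-y=\frac{t-(b+y(\sgn a)|a|^{1/\theta})}{(\sgn a)|a|^{1/\theta}}$. The kernel then becomes $\overline{\psi_{a,\,b+y(\sgn a)|a|^{1/\theta},\,\theta}(t)}$, so after rearranging the double integral over $(t,y)$ the inner $t$-integral is exactly $(W_\psi^\theta g)(b+(\sgn a)|a|^{1/\theta}y,a)$. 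This produces the intermediate formula $(W_{f\star\psi}^\theta g)(b,a)=\int_{\mathbb{R}}\overline{f(y)}\,(W_\psi^\theta g)(b+(\sgn a)|a|^{1/\theta}y,a)\,dy$.

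To finish, I would perform the substitution $u=(\sgn a)|a|^{1/\theta}y$, whose Jacobian contributes the factor $|a|^{-1/\theta}$; comparing the resulting integral with Definition \ref{P2defn2.2} identifies it as the correlation of $f(\cdot/((\sgn a)|a|^{1/\theta}))$ with $(W_\psi^\theta g)(\cdot,a)$ in the $b$-variable, which is the claimed expression. The second identity is handled identically: expanding $f\circ\psi$ shifts the argument in the opposite direction, giving $(W_{f\circ\psi}^\theta g)(b,a)=\int_{\mathbb{R}}f(y)\,(W_\psi^\theta g)(b-(\sgn a)|a|^{1/\theta}y,a)\,dy$ (note the conjugate from the correlation cancels, so $f$ now appears unconjugated), and the same change of variable recasts it as the convolution of Definition \ref{P2defn2.1}.

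The only genuine subtlety is justifying the interchange of integration; everything else is bookkeeping. I would verify Fubini via Tonelli: by Cauchy--Schwarz the inner integral satisfies $\int_{\mathbb{R}}|g(t)|\,|\psi_{a,b',\theta}(t)|\,dt\leq\|g\|_{L^2(\mathbb{R})}\|\psi\|_{L^2(\mathbb{R})}$ uniformly in $b'$ (dilation and translation preserve the $L^2$-norm), so the iterated absolute integral is dominated by $\|f\|_{L^1(\mathbb{R})}\|g\|_{L^2(\mathbb{R})}\|\psi\|_{L^2(\mathbb{R})}<\infty$, which legitimizes the exchange. The remaining care is purely in tracking the $\sgn a$ factors so that the shift $(\sgn a)|a|^{1/\theta}y$ and the rescaling $f(\cdot/((\sgn a)|a|^{1/\theta}))$ line up exactly with the correlation and convolution conventions of Definitions \ref{P2defn2.1} and \ref{P2defn2.2}.
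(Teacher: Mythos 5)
Your proposal follows essentially the same route as the paper's own proof: expand $(f\star\psi)_{a,b,\theta}$ (resp.\ $(f\circ\psi)_{a,b,\theta}$) inside the definition of the CFrWT, absorb the shift into the argument via $\frac{t-b}{(\sgn a)|a|^{1/\theta}}-y=\frac{t-(b+y(\sgn a)|a|^{1/\theta})}{(\sgn a)|a|^{1/\theta}}$, interchange the order of integration to obtain $\int_{\mathbb{R}}\overline{f(y)}\,(W_\psi^\theta g)(b+(\sgn a)|a|^{1/\theta}y,a)\,dy$, and rescale to match Definitions \ref{P2defn2.1} and \ref{P2defn2.2}. The only difference is that you explicitly verify the Fubini interchange via the uniform Cauchy--Schwarz bound $\|g\|_{L^2(\mathbb{R})}\|\psi\|_{L^2(\mathbb{R})}$, a justification the paper leaves implicit.
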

\begin{theorem}
Let $f,g\in L^2(\mathbb{R})$ and $\phi,\psi$ be two fractional wavelets, then 
$$\int_{\mathbb{R}}|b|^{\frac{1}{\theta}-1}(W_\phi^\theta f)(b,a)\overline{(W_\psi^\theta g)(b,a)}db=\frac{|a|^\frac{1}{\theta}}{4\pi^2\theta^2}\langle P_\theta,Q_\theta\rangle_{L^2(\mathbb{R})},$$
where\\
$P_\theta(\xi)=|\xi|^{\frac{1}{\theta}-1}(\mathfrak{F}_\theta f)(\xi)\overline{(\mathfrak{F}_\theta\phi)(a\xi)}~\mbox{and}~Q_\theta(\xi)=|\xi|^{\frac{1}{\theta}-1}(\mathfrak{F}_\theta g)(\xi)\overline{(\mathfrak{F}_\theta\psi)(a\xi)}.$
\end{theorem}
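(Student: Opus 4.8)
The plan is to pass to the fractional Fourier side in the shift variable $b$ and then exploit the inversion (resolution of identity) attached to $\mathfrak{F}_\theta$. First I would rewrite each transform as an inverse FrFT. Substituting the inverse fractional Fourier representation of $f$ into the definition (\ref{P2eqn16.}) and invoking Lemma \ref{P2lemma2.1} for $\mathfrak{F}_\theta\phi_{a,b,\theta}$, a short computation gives
\[
(W_\phi^\theta f)(b,a)=\frac{|a|^{\frac{1}{2\theta}}}{2\pi\theta}\int_{\mathbb{R}}P_\theta(\xi)\,e^{i(\sgn\xi)|\xi|^{\frac{1}{\theta}}b}\,d\xi,
\]
and, conjugating,
\[
\overline{(W_\psi^\theta g)(b,a)}=\frac{|a|^{\frac{1}{2\theta}}}{2\pi\theta}\int_{\mathbb{R}}\overline{Q_\theta(\eta)}\,e^{-i(\sgn\eta)|\eta|^{\frac{1}{\theta}}b}\,d\eta,
\]
where $P_\theta,Q_\theta$ are exactly the functions in the statement. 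It is worth noting that the factor $|\xi|^{\frac1\theta-1}$ built into $P_\theta$ is precisely the Jacobian weight produced by the inverse FrFT, which is why these functions appear so naturally and why the constant is $\tfrac{1}{2\pi\theta}$ per factor.

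Next I would multiply the two expressions, insert the weight $|b|^{\frac1\theta-1}$, integrate in $b$, and interchange the order of the $b$-, $\xi$- and $\eta$-integrations by Fubini. The prefactor $\tfrac{|a|^{1/\theta}}{4\pi^2\theta^2}$ then separates out, leaving the inner oscillatory integral
\[
\int_{\mathbb{R}}|b|^{\frac1\theta-1}\,e^{i\left[(\sgn\xi)|\xi|^{\frac1\theta}-(\sgn\eta)|\eta|^{\frac1\theta}\right]b}\,db,
\]
which I would identify, through the fractional Fourier inversion formula, with the Dirac mass $\delta(\xi-\eta)$. The delta collapses the double integral onto the diagonal $\eta=\xi$, giving $\int_{\mathbb{R}}P_\theta(\xi)\overline{Q_\theta(\xi)}\,d\xi=\langle P_\theta,Q_\theta\rangle_{L^2(\mathbb{R})}$, and combined with the constant this is the asserted identity.

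The main obstacle is exactly this $b$-integral. The exponent is \emph{linear} in $b$, whereas the weight $|b|^{\frac1\theta-1}$ and the nonlinear frequency map $\xi\mapsto(\sgn\xi)|\xi|^{\frac1\theta}$ (with Jacobian $\tfrac1\theta|\xi|^{\frac1\theta-1}$) have to be matched so that the integral reproduces $\delta(\xi-\eta)$ with the correct normalisation; consequently it cannot be read as a convergent Lebesgue integral and must be interpreted distributionally, or through a Gaussian/truncation regularisation followed by a limiting argument. This is also the step where all the $2\pi$ and $\theta$ factors must be tracked with care, so I would verify the final constant independently by checking the degenerate case $\theta=1$, where the identity should reduce to the classical ``partial reconstruction'' relation $\int_{\mathbb{R}}(W_\phi f)(b,a)\overline{(W_\psi g)(b,a)}\,db$ evaluated via ordinary Plancherel.

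The Fubini interchange is a secondary, purely technical point. I would first establish the identity for a dense class of sufficiently regular $f,g$ (say with rapidly decaying $\mathfrak{F}_\theta f,\mathfrak{F}_\theta g$), where absolute integrability of the triple integrand is transparent, and then extend to all $f,g\in L^2(\mathbb{R})$ by density together with the $L^2$ continuity of $b\mapsto(W_\phi^\theta f)(b,a)$ for fixed $a$ that is already implicit in the orthogonality relation of Theorem \ref{P2OR}.
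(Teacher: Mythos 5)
Your opening representation is correct and coincides with the paper's intermediate step: the Parseval relation of \cite[Theorem 1]{srivastava2019certain} combined with Lemma \ref{P2lemma2.1} does give
\[
(W_\phi^\theta f)(b,a)=\frac{|a|^{\frac{1}{2\theta}}}{2\pi\theta}\int_{\mathbb{R}}P_\theta(\xi)\,e^{i(\sgn\xi)|\xi|^{\frac{1}{\theta}}b}\,d\xi,
\]
so up to the Fubini interchange you are on the paper's track. The genuine gap is the identification of
\[
\int_{\mathbb{R}}|b|^{\frac{1}{\theta}-1}e^{i\left[(\sgn\xi)|\xi|^{\frac{1}{\theta}}-(\sgn\eta)|\eta|^{\frac{1}{\theta}}\right]b}\,db
\]
with $\delta(\xi-\eta)$: for $\theta\neq1$ this is false, and no regularisation can repair it. Since the phase is linear in $b$, the integral depends on $(\xi,\eta)$ only through $\mu=(\sgn\xi)|\xi|^{\frac{1}{\theta}}-(\sgn\eta)|\eta|^{\frac{1}{\theta}}$ and equals the Fourier transform of $|b|^{s}$, $s=\frac{1}{\theta}-1\geq0$, evaluated at $\mu$. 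As a tempered distribution this is homogeneous of degree $-s-1$, namely a constant multiple of $|\mu|^{-\frac{1}{\theta}}$ when $s>0$; a Dirac mass (degree $-1$) arises only at $s=0$, i.e.\ only for $\theta=1$. And even if the weight were absent, $\int_{\mathbb{R}}e^{i\mu b}db=2\pi\delta(\mu)$ converts to $2\pi\theta|\xi|^{1-\frac{1}{\theta}}\delta(\xi-\eta)$ via the Jacobian of $\xi\mapsto(\sgn\xi)|\xi|^{\frac{1}{\theta}}$, so the collapse would carry an extra weight $|\xi|^{1-\frac{1}{\theta}}$ on the diagonal rather than produce $\langle P_\theta,Q_\theta\rangle$. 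Your mechanism therefore yields a genuinely nonlocal bilinear form $c_\theta\iint P_\theta(\xi)\overline{Q_\theta(\eta)}\,|\mu|^{-\frac{1}{\theta}}d\xi\,d\eta$, not the asserted diagonal pairing. Note too that your proposed sanity check is powerless here: the weight $|b|^{\frac{1}{\theta}-1}$ trivialises exactly at $\theta=1$, the one case where the delta heuristic is valid, so checking $\theta=1$ cannot detect the failure.

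The paper does not attempt the delta computation at all: after reaching the same double-integral expression, it reads the inner $\xi$- and $\omega$-integrals as fractional Fourier transforms, $\int_{\mathbb{R}}e^{-i(\sgn\xi)|\xi|^{\frac{1}{\theta}}b}\overline{P_\theta(\xi)}\,d\xi=\bigl(\mathfrak{F}_\theta\overline{P_\theta}\bigr)(b)$, and then consumes the weight $|b|^{\frac{1}{\theta}-1}$ by a \emph{second} application of the weighted Parseval relation, turning $\int_{\mathbb{R}}|b|^{\frac{1}{\theta}-1}\overline{\bigl(\mathfrak{F}_\theta\overline{P_\theta}\bigr)(b)}\bigl(\mathfrak{F}_\theta\overline{Q_\theta}\bigr)(b)\,db$ into $\bigl\langle\overline{Q_\theta},\overline{P_\theta}\bigr\rangle_{L^2(\mathbb{R})}$ and hence into $\langle P_\theta,Q_\theta\rangle_{L^2(\mathbb{R})}$. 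If you wish to salvage your route, the delta step must be replaced by precisely this Parseval step; but you should then scrutinise whether the kernel $e^{-i(\sgn\xi)|\xi|^{\frac{1}{\theta}}b}$ — linear in $b$, nonlinear in $\xi$ — may legitimately be read as the $\mathfrak{F}_\theta$-kernel in the variable $b$, since in definition (\ref{P2eqndef}) the nonlinearity sits in the transform variable. Indeed, your honest distributional computation, once corrected as above, is evidence that the weight $|b|^{\frac{1}{\theta}-1}$ does not diagonalise a linear-phase pairing for $\theta\neq1$, so this identification, on which the paper's own constant rests, deserves independent verification. Your Fubini and density remarks are sound but secondary: they address the easy part of the argument, not the step that fails.
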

\begin{proof}
\begin{eqnarray*}
\int_{\mathbb{R}}|b|^{\frac{1}{\theta}-1}(W_\phi^\theta f)(b,a)\overline{(W_\psi^\theta g)(b,a)}db&=&\int_{\mathbb{R}}|b|^{\frac{1}{\theta}-1}\langle f,\phi_{a,b,\theta}\rangle_{L^2(\mathbb{R})}\overline{\langle g,\psi_{a,b,\theta}\rangle_{L^2(\mathbb{R})}}db
\end{eqnarray*}
Using Parseval's formula \cite[Theorem 1]{srivastava2019certain}, we have
\begin{align}\label{P2eqn22}
&\hspace{-1.2cm}\displaystyle\int_{\mathbb{R}}|b|^{\frac{1}{\theta}-1}(W_\phi^\theta f)(b,a)\overline{(W_\psi^\theta g)(b,a)}db\notag\\
&=\displaystyle\left(\frac{1}{2\pi\theta}\right)^2\int_{\mathbb{R}}|b|^{\frac{1}{\theta}-1}\left\langle |\cdot|^{\frac{1}{\theta}-1}(\mathfrak{F}_\theta f)(\cdot),(\mathfrak{F}_\theta\phi_{a,b,\theta})(\cdot)\right\rangle_{L^2(\mathbb{R})}\overline{\left\langle |\cdot|^{\frac{1}{\theta}-1}(\mathfrak{F}_\theta g)(\cdot),(\mathfrak{F}_\theta\psi_{a,b,\theta})(\cdot)\right\rangle_{L^2(\mathbb{R})}}db\notag\\
&=\displaystyle\left(\frac{1}{2\pi\theta}\right)^2\int_{\mathbb{R}}|b|^{\frac{1}{\theta}-1}\left(\int_{\mathbb{R}}|\xi|^{\frac{1}{\theta}-1}(\mathfrak{F}_\theta f)(\xi)\overline{(\mathfrak{F}_\theta\phi_{a,b,\theta})(\xi)}d\xi\right)\overline{\left(\int_{\mathbb{R}}|\omega|^{\frac{1}{\theta}-1}(\mathfrak{F}_\theta f)(\omega)\overline{(\mathfrak{F}_\theta\psi_{a,b,\theta})(\omega)}d\omega\right)}db\notag\\
&=\displaystyle\left(\frac{1}{2\pi\theta}\right)^2|a|^{\frac{1}{\theta}}\int_{\mathbb{R}}|b|^{\frac{1}{\theta}-1}\left(\int_{\mathbb{R}}|\xi|^{\frac{1}{\theta}-1}(\mathfrak{F}_\theta f)(\xi)\overline{e^{-i(\sgn\xi)|\xi|^\frac{1}{\theta}b}(\mathfrak{F}_\theta\phi)(a\xi)}d\xi\right)\notag\\
&\hspace{7.5cm}\times\overline{\left(\int_{\mathbb{R}}|\omega|^{\frac{1}{\theta}-1}(\mathfrak{F}_\theta g)(\omega)\overline{e^{-i(\sgn(\omega))|\omega|^\frac{1}{\theta}b}(\mathfrak{F}_\theta\psi)(a\omega)}d\omega\right)}db\notag\\
&=\displaystyle\left(\frac{1}{2\pi\theta}\right)^2|a|^{\frac{1}{\theta}}\int_{\mathbb{R}}|b|^{\frac{1}{\theta}-1}\overline{\left(\int_{\mathbb{R}}e^{-i(\sgn\xi)|\xi|^\frac{1}{\theta}b}|\xi|^{\frac{1}{\theta}-1}\overline{(\mathfrak{F}_\theta f)(\xi)}(\mathfrak{F}_\theta\phi)(a\xi)d\xi\right)}\notag\\
&\hspace{7.5cm}\times\left(\int_{\mathbb{R}}e^{-i(\sgn(\omega))|\omega|^\frac{1}{\theta}b}|\omega|^{\frac{1}{\theta}-1}\overline{(\mathfrak{F}_\theta g)(\omega)}(\mathfrak{F}_\theta\psi)(a\omega)d\omega\right)db\notag\\
&=\displaystyle\left(\frac{1}{2\pi\theta}\right)^2|a|^{\frac{1}{\theta}}\int_{\mathbb{R}}|b|^{\frac{1}{\theta}-1}\overline{\left(\int_{\mathbb{R}}e^{-i(\sgn\xi)|\xi|^\frac{1}{\theta}b}\overline{P_\theta(\xi)}d\xi\right)}\left(\int_{\mathbb{R}}e^{-i(\sgn(\omega))|\omega|^\frac{1}{\theta}b}\overline{Q_\theta(\omega)}d\omega\right)db\notag\\
&=\displaystyle\left(\frac{1}{2\pi\theta}\right)^2|a|^{\frac{1}{\theta}}\int_{\mathbb{R}}|b|^{\frac{1}{\theta}-1}\overline{\left(\mathfrak{F}_\theta\overline{P_\theta}\right)(b)}\left(\mathfrak{F}_\theta\overline{Q_\theta}\right)(b)db.
\end{align}
Using \cite[Theorem 1]{srivastava2019certain} in equation (\ref{P2eqn22}),  we get
\begin{eqnarray*}
\int_{\mathbb{R}}|b|^{\frac{1}{\theta}-1}(W_\phi^\theta f)(b,a)\overline{(W_\psi^\theta g)(b,a)}db&=&\left(\frac{1}{2\pi\theta}\right)^2|a|^\frac{1}{\theta}\left\langle|\cdot|^{\frac{1}{\theta}-1}\left(\mathfrak{F}_\theta\overline{Q_\theta}\right)(\cdot),\left(\mathfrak{F}_\theta\overline{P_\theta}\right)(\cdot)\right\rangle_{L^2(\mathbb{R})}\\
&=&\left(\frac{1}{2\pi\theta}\right)^2|a|^\frac{1}{\theta}\left\langle\overline{Q_\theta},\overline{P_\theta}\right\rangle_{L^2(\mathbb{R})}\\
&=&\left(\frac{1}{2\pi\theta}\right)^2|a|^\frac{1}{\theta}\left\langle P_\theta,Q_\theta\right\rangle_{L^2(\mathbb{R})}.
\end{eqnarray*}
This completes the proof.
\end{proof}
\section{CFrWT on Hardy space $\&$ Morrey  spaces}
In this section we consider the normalized form of the operator $W_\psi^\theta$ i.e., $L_\psi^\theta :=\frac{1}{\sqrt{C_{\psi,\theta}}}W_\psi^\theta$ and study some of its properties on Hardy space and Morrey space. The purpose of this section is to establish the boundedness of the CFrWT on these spaces and to study the dependence of the CFrWT on its fractional wavelet and its argument function via $H^1(\mathbb{R})$ and $L_M^{1,\nu}(\mathbb{R})-$distance estimate of two CFrWTs with different fractional wavelets of different argument functions.

\subsection{Hardy space}
\begin{defn}(\cite{chuong2013boundedness})
 The Hardy space $H^1(\mathbb{R}),$ defined by
$$H^1(\mathbb{R})=\left\{f\in L^1(\mathbb{R}):\int_{\mathbb{R}}\sup_{t>0}|(f\star\eta_{t})(x)|dx<\infty\right\},$$
is a Banach space normed by
\begin{equation}
\|f\|_{H^1(\mathbb{R})}=\int_{\mathbb{R}}\sup_{t>0}|(f\star\eta_{t})(x)|dx,
\end{equation}\label{P2eqn28}
where $\eta$ is a function in Schwartz space such that $\int_{\mathbb{R}}\eta(x)dx\neq 0$ and $\eta_{t}(x)=\frac{1}{t}\eta(\frac{x}{t}), t>0,x\in\mathbb{R}.$
\end{defn}

We now study some properties of the CFrWT on the Hardy space $H^1(\mathbb{R}).$ To establish the boundedness of the CFrWT on the Hardy space we need to  prove the following  lemma.
\begin{lemma}
Let $a\in\mathbb{R}-\{0\},$ $f\in H^1(\mathbb{R})$ and $\psi$ be a  fractional wavelet, then $(L_\psi^\theta f)(\cdot, a)$ $\in L^1(\mathbb{R}),$ where $L_\psi^\theta f$ denotes the CFrWT of $f.$
\end{lemma}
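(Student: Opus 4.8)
The plan is to recognize that, for fixed $a\neq 0$, the map $b\mapsto (W_\psi^\theta f)(b,a)$ is essentially a convolution of $f$ against a rescaled reflection of $\overline{\psi}$, and then to invoke Young's inequality (equivalently, Tonelli's theorem). The two key facts are that $\psi$ is a fractional wavelet, so in particular $\psi\in L^1(\mathbb{R})$ by Definition \ref{defnAC}, and that $f\in H^1(\mathbb{R})\subset L^1(\mathbb{R})$; thus the Hardy-space hypothesis enters only through the inclusion $H^1(\mathbb{R})\subset L^1(\mathbb{R})$.

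First I would write out the definition (\ref{P2eqn16.}) together with (\ref{P2eqn14.}) and take absolute values inside the integral to obtain the pointwise bound $|(W_\psi^\theta f)(b,a)|\leq |a|^{-\frac{1}{2\theta}}\int_{\mathbb{R}}|f(t)|\,\bigl|\psi((t-b)/((\sgn a)|a|^{1/\theta}))\bigr|\,dt$. Next I would integrate this over $b\in\mathbb{R}$ and, since the integrand is nonnegative, apply Tonelli's theorem to interchange the order of integration, isolating the inner integral $\int_{\mathbb{R}}\bigl|\psi((t-b)/((\sgn a)|a|^{1/\theta}))\bigr|\,db$.

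The inner integral is handled by the change of variables $v=(t-b)/((\sgn a)|a|^{1/\theta})$, which, after accounting for the $\sgn a$ factor in the Jacobian, evaluates to $|a|^{1/\theta}\|\psi\|_{L^1(\mathbb{R})}$ independently of $t$. Substituting back and pulling out the constant yields $\|(W_\psi^\theta f)(\cdot,a)\|_{L^1(\mathbb{R})}\leq |a|^{\frac{1}{2\theta}}\|\psi\|_{L^1(\mathbb{R})}\|f\|_{L^1(\mathbb{R})}<\infty$. Finally, since $L_\psi^\theta=\frac{1}{\sqrt{C_{\psi,\theta}}}W_\psi^\theta$ with $0<C_{\psi,\theta}<\infty$ by the admissibility condition (\ref{P2eqn15}), dividing through by $\sqrt{C_{\psi,\theta}}$ gives $(L_\psi^\theta f)(\cdot,a)\in L^1(\mathbb{R})$, as required.

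The computation carries no genuine obstacle; the only point demanding care is the bookkeeping of the $\sgn a$ factor in the change of variables. It is worth noting that the resulting bound grows like $|a|^{1/(2\theta)}$, so integrability holds for each fixed $a\neq 0$ but not uniformly in $a$, which is exactly consistent with the lemma being formulated for a single fixed $a$.
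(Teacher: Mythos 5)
Your proposal is correct and follows essentially the same route as the paper: take absolute values, apply Tonelli to swap the $b$- and $t$-integrals, and use the translation/dilation change of variables to extract $|a|^{\frac{1}{2\theta}}\|\psi\|_{L^1(\mathbb{R})}\|f\|_{L^1(\mathbb{R})}$, with $f\in L^1(\mathbb{R})$ coming from $H^1(\mathbb{R})\subset L^1(\mathbb{R})$. The only cosmetic difference is that the paper performs the substitution in the $t$-integral before integrating in $b$, whereas you substitute in the inner $b$-integral after Tonelli; the bookkeeping of the $\sgn a$ Jacobian and the final bound are identical.
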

\begin{proof}
For fix $a\in\mathbb{R}-\{0\},$ $(L_\psi^\theta f)(b,a)$ is a function of $b$ and is such that
\begin{eqnarray*}
|(L_\psi^\theta f)(b,a)|&\leq&\frac{1}{\sqrt{C_{\psi,\theta}}}\int_{\mathbb{R}}|f(u)||\psi_{a,b,\theta}(u)|dt\\
&=&\frac{1}{\sqrt{C_{\psi,\theta}}}\int_{\mathbb{R}}|f(u)|\frac{1}{|a|^{\frac{1}{2\theta}}}\left|\psi\left(\frac{u-b}{(\sgn a)|a|^\frac{1}{\theta}}\right)\right|du\\
&=&\frac{|a|^{\frac{1}{2\theta}}}{\sqrt{C_{\psi,\theta}}}\int_{\mathbb{R}}\left|f\left((\sgn a)|a|^{\frac{1}{\theta}}x+b\right)\right||\psi(x)|dx.
\end{eqnarray*}
Therefore, 
\begin{eqnarray*}
\int_{\mathbb{R}}|(L_\psi^\theta f)(b,a)|db&\leq&\frac{|a|^{\frac{1}{2\theta}}}{\sqrt{C_{\psi,\theta}}}\int_{\mathbb{R}}|\psi(x)|\left(\int_{\mathbb{R}}\left|f\left((\sgn a)|a|^{\frac{1}{\theta}}x+b\right)\right|db\right)dx\\
&=&\frac{|a|^{\frac{1}{2\theta}}}{\sqrt{C_{\psi,\theta}}}\|\psi\|_{L^1(\mathbb{R})}\|f\|_{L^1(\mathbb{R})}.
\end{eqnarray*}
Hence, it follows that $(L_\psi^\theta f)(\cdot,a)\in L^1(\mathbb{R}).$
\end{proof}
\begin{theorem}\label{P2HS1st}
Let $a\in\mathbb{R}-\{0\},$ then the operator $L_\psi^\theta:H^1(\mathbb{R})\rightarrow H^1(\mathbb{R})$ defined by $f\rightarrow (L_\psi^\theta f)(\cdot,a)$ is bounded. Furthermore, $$\|(L_\psi^\theta f)(\cdot,a)\|_{H^1(\mathbb{R})}\leq\frac{|a|^{\frac{1}{2\theta}}}{\sqrt{C_{\psi,\theta}}}\|\psi\|_{L^1(\mathbb{R})}\|f\|_{H^1(\mathbb{R})}.$$
\end{theorem}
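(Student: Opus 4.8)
The plan is to realize the map $f\mapsto(L_\psi^\theta f)(\cdot,a)$, for each fixed $a\neq 0$, as a convolution operator in the variable $b$, and then to exploit the fact that $H^1(\mathbb{R})$ is a convolution module over $L^1(\mathbb{R})$. First I would rewrite the transform as a convolution. Starting from $(W_\psi^\theta f)(b,a)=\int_{\mathbb{R}}f(t)\frac{1}{|a|^{1/2\theta}}\overline{\psi\left(\frac{t-b}{(\sgn a)|a|^{1/\theta}}\right)}dt$ and substituting $w=b-t$ in the kernel, one sees that $(W_\psi^\theta f)(\cdot,a)=f\star h_a$, where
$$h_a(w)=\frac{1}{|a|^{1/2\theta}}\overline{\psi\left(\frac{-w}{(\sgn a)|a|^{1/\theta}}\right)}.$$
A single change of variables then gives $\|h_a\|_{L^1(\mathbb{R})}=|a|^{1/2\theta}\|\psi\|_{L^1(\mathbb{R})}$, so in particular $h_a\in L^1(\mathbb{R})$ (consistent with the $L^1$ estimate of the preceding lemma, which is just Young's inequality for this convolution).

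The core of the argument is the module inequality $\|f\star h\|_{H^1(\mathbb{R})}\leq\|h\|_{L^1(\mathbb{R})}\|f\|_{H^1(\mathbb{R})}$ for $f\in H^1(\mathbb{R})$ and $h\in L^1(\mathbb{R})$. To prove it I would use the associativity and commutativity of convolution to write $((f\star h)\star\eta_t)(x)=(h\star(f\star\eta_t))(x)=\int_{\mathbb{R}}h(y)(f\star\eta_t)(x-y)\,dy$. Bounding $|(f\star\eta_t)(x-y)|$ by the radial maximal function $(M_\eta f)(x-y):=\sup_{s>0}|(f\star\eta_s)(x-y)|$, which is independent of $t$, yields
$$\sup_{t>0}|((f\star h)\star\eta_t)(x)|\leq\int_{\mathbb{R}}|h(y)|\,(M_\eta f)(x-y)\,dy=(|h|\star M_\eta f)(x).$$
Integrating in $x$ and applying Tonelli's theorem factors the right-hand side as $\|h\|_{L^1(\mathbb{R})}\int_{\mathbb{R}}(M_\eta f)(x)\,dx=\|h\|_{L^1(\mathbb{R})}\|f\|_{H^1(\mathbb{R})}$, which is precisely the asserted module inequality.

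Combining the two steps and recalling $L_\psi^\theta=\frac{1}{\sqrt{C_{\psi,\theta}}}W_\psi^\theta$ gives
$$\|(L_\psi^\theta f)(\cdot,a)\|_{H^1(\mathbb{R})}=\frac{1}{\sqrt{C_{\psi,\theta}}}\|f\star h_a\|_{H^1(\mathbb{R})}\leq\frac{1}{\sqrt{C_{\psi,\theta}}}\|h_a\|_{L^1(\mathbb{R})}\|f\|_{H^1(\mathbb{R})}=\frac{|a|^{1/2\theta}}{\sqrt{C_{\psi,\theta}}}\|\psi\|_{L^1(\mathbb{R})}\|f\|_{H^1(\mathbb{R})},$$
which is exactly the claimed bound, and boundedness of the operator follows at once. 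The main obstacle I anticipate is justifying the two interchanges hidden in the module inequality: moving $\eta_t$ past $h$ via associativity of convolution (legitimate since $f,h\in L^1(\mathbb{R})$ and $\eta_t$ lies in the Schwartz space) and pulling the supremum over $t$ inside the $y$-integral, which is exactly where domination by the $t$-independent maximal function $M_\eta f$ is essential. Once this domination is secured, the closing application of Tonelli's theorem and the change-of-variables computation of $\|h_a\|_{L^1(\mathbb{R})}$ are routine.
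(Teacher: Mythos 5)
Your proof is correct, and it reorganizes the argument in a way the paper does not. The paper proves the bound by direct computation: it writes $(L_\psi^\theta f)(b,a)$ with the dilated variable explicit, interchanges the $x$- and $y$-integrals to show $\left((L_\psi^\theta f)(\cdot,a)\star\eta_t\right)(b)=\frac{|a|^{1/2\theta}}{\sqrt{C_{\psi,\theta}}}\int_{\mathbb{R}}(f\star\eta_t)\left((\sgn a)|a|^{1/\theta}x+b\right)\overline{\psi(x)}\,dx$, then takes $\sup_{t>0}$ inside, integrates in $b$, and uses translation invariance of $\int_{\mathbb{R}}\sup_{t>0}|(f\star\eta_t)(\cdot+b)|\,db$ to peel off $\|f\|_{H^1(\mathbb{R})}$. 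You instead factor the proof through two cleanly separated facts: the identity $(W_\psi^\theta f)(\cdot,a)=f\star h_a$ with $h_a(w)=|a|^{-1/2\theta}\,\overline{\psi\left(\frac{-w}{(\sgn a)|a|^{1/\theta}}\right)}$ and $\|h_a\|_{L^1(\mathbb{R})}=|a|^{1/2\theta}\|\psi\|_{L^1(\mathbb{R})}$ (both computations check out, including the reflection and the Jacobian $|a|^{1/\theta}$), and the general module inequality $\|f\star h\|_{H^1(\mathbb{R})}\leq\|h\|_{L^1(\mathbb{R})}\|f\|_{H^1(\mathbb{R})}$, proved by dominating $\sup_{t>0}|((f\star h)\star\eta_t)|$ by $|h|\star M_\eta f$ and applying Tonelli. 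The underlying estimates are the same ones the paper performs inline — your maximal-function domination and Tonelli step is exactly the paper's interchange of $\sup_{t>0}$ with the wavelet integral, transported to the kernel $h_a$ — but your packaging buys something real: the module inequality isolates where the fractional structure enters (only through $\|h_a\|_{L^1(\mathbb{R})}$), subsumes the preceding $L^1$ lemma as Young's inequality as you note, and would also deliver the two difference estimates in the paper's Theorem 4.2 immediately, since $(L_\phi^\theta f-L_\psi^\theta f)(\cdot,a)$ is again convolution with a difference kernel. The price is the associativity/commutativity justification $((f\star h)\star\eta_t)=h\star(f\star\eta_t)$, which you correctly flag and which is legitimate here: all sides are defined pointwise and continuous in $b$ for $f,h\in L^1(\mathbb{R})$ and $\eta_t$ Schwartz, so the a.e. identity from Fubini upgrades to everywhere, and the $t$-uniform domination by the $t$-independent majorant $M_\eta f$ makes the passage of the supremum inside the $y$-integral sound.
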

\begin{proof}
From definition of $L_\psi^\theta $, we get
\begin{eqnarray*}
(L_\psi^\theta f)(b,a)&=&\frac{|a|^{\frac{1}{2\theta}}}{\sqrt{C_{\psi,\theta}}}\int_{\mathbb{R}}f\left((\sgn a)|a|^{\frac{1}{\theta}}x+b\right)\overline{\psi(x)}dx.\\
\end{eqnarray*}
Now,
\begin{eqnarray*}
((L_\psi^\theta f)(\cdot,a)\star\eta_t(\cdot))(b)&=&\int_{\mathbb{R}}(L_\psi^\theta f)(b-y,a)\eta_t(y)dy\\
&=&\int_{\mathbb{R}}\frac{|a|^{\frac{1}{2\theta}}}{\sqrt{C_{\psi,\theta}}}\left(\int_{\mathbb{R}}f\left((\sgn a)|a|^{\frac{1}{\theta}}x+b-y\right)\overline{\psi(x)}dx\right)\eta_t(y)dy\\
&=&\frac{|a|^{\frac{1}{2\theta}}}{\sqrt{C_{\psi,\theta}}}\int_{\mathbb{R}}\overline{\psi(x)}\left(\int_{\mathbb{R}}f\left((\sgn a)|a|^{\frac{1}{\theta}}x+b-y\right)\eta_t(y)dy\right)dx\\
&=&\frac{|a|^{\frac{1}{2\theta}}}{\sqrt{C_{\psi,\theta}}}\int_{\mathbb{R}}(f\star\eta_t)\left(\sgn a|a|^{\frac{1}{\theta}}x+b\right)\overline{\psi(x)}dx.\\
\end{eqnarray*}
Therefore, 
\begin{eqnarray*}
\|(L_\psi^\theta f)(\cdot,a)\|_{H^1(\mathbb{R})}&=&\int_{\mathbb{R}}\sup_{t>0}|\left((L_\psi^\theta f)(\cdot,a)\star\eta_t(\cdot)\right)(b)|db\\
&\leq &\frac{|a|^{\frac{1}{2\theta}}}{\sqrt{C_{\psi,\theta}}}\int_{\mathbb{R}}|\overline{\psi(x)}|\left(\int_{\mathbb{R}}\sup_{t>0}|(f\star\eta_t)((\sgn a)|a|^{\frac{1}{\theta}}x+b)|db\right)dx.\\
&=&\frac{|a|^{\frac{1}{2\theta}}}{\sqrt{C_{\psi,\theta}}}\|\psi\|_{L^1(\mathbb{R})}\|f\|_{H^1(\mathbb{R})}.
\end{eqnarray*}
This completes the proof.
\end{proof}
\begin{corollary}
If $a\in\mathbb{R}-\{0\},$ $f\in H^1(\mathbb{R})$ and $\psi$ is a fractional wavelet,  then $\|(L_\psi^\theta f)(\cdot,a)\|_{H^1(\mathbb{R})}=O\left(|a|^{\frac{1}{2\theta}}\right).$
\end{corollary}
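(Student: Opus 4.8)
The plan is to obtain this corollary as an immediate consequence of Theorem \ref{P2HS1st}, which already furnishes the explicit norm inequality
$$\|(L_\psi^\theta f)(\cdot,a)\|_{H^1(\mathbb{R})}\leq\frac{|a|^{\frac{1}{2\theta}}}{\sqrt{C_{\psi,\theta}}}\|\psi\|_{L^1(\mathbb{R})}\|f\|_{H^1(\mathbb{R})}.$$
The entire content of the corollary is the observation that the factor multiplying $|a|^{\frac{1}{2\theta}}$ on the right-hand side does not depend on $a$. So the first and essentially only step is to isolate the $a$-dependence: write the bound as $C\,|a|^{\frac{1}{2\theta}}$ with $C=\frac{1}{\sqrt{C_{\psi,\theta}}}\|\psi\|_{L^1(\mathbb{R})}\|f\|_{H^1(\mathbb{R})}$.

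Next I would verify that this constant $C$ is finite and nonzero, so that the big-$O$ statement is meaningful. Since $\psi$ is a fractional wavelet, Definition \ref{defnAC} guarantees $0<C_{\psi,\theta}<\infty$, so $\frac{1}{\sqrt{C_{\psi,\theta}}}$ is a finite positive number; moreover $\psi\in L^1(\mathbb{R})$ gives $\|\psi\|_{L^1(\mathbb{R})}<\infty$, and the hypothesis $f\in H^1(\mathbb{R})$ gives $\|f\|_{H^1(\mathbb{R})}<\infty$. Hence $C$ is a finite constant depending only on $\psi,\theta$ and $f$, but not on $a$.

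Finally, I would conclude by unwinding the definition of the Landau symbol: the inequality $\|(L_\psi^\theta f)(\cdot,a)\|_{H^1(\mathbb{R})}\leq C\,|a|^{\frac{1}{2\theta}}$ valid for every $a\in\mathbb{R}-\{0\}$ is precisely the assertion $\|(L_\psi^\theta f)(\cdot,a)\|_{H^1(\mathbb{R})}=O\!\left(|a|^{\frac{1}{2\theta}}\right)$. There is no genuine obstacle to overcome here; the only point requiring a word of care is confirming the finiteness of $C$ from the defining properties of a fractional wavelet and of the space $H^1(\mathbb{R})$, which I would state explicitly so the estimate is not vacuous.
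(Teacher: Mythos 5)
Your proposal is correct and follows exactly the paper's route: the paper's own proof is a one-line appeal to Theorem \ref{P2HS1st}, whose bound $\frac{|a|^{\frac{1}{2\theta}}}{\sqrt{C_{\psi,\theta}}}\|\psi\|_{L^1(\mathbb{R})}\|f\|_{H^1(\mathbb{R})}$ has an $a$-independent constant, giving the $O\left(|a|^{\frac{1}{2\theta}}\right)$ estimate immediately. Your added check that the constant is finite (via $0<C_{\psi,\theta}<\infty$, $\psi\in L^1(\mathbb{R})$, $f\in H^1(\mathbb{R})$) is a harmless elaboration of the same argument.
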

\begin{proof}
By using the theorem \ref{P2HS1st}, we get the result.
\end{proof}
We will now determine the $H^1(\mathbb{R})-$distance of two CFrWTs with different fractional wavelets and different argument functions to study the dependence of the transform on its fractional wavelet and its argument.
\begin{theorem}\label{P2HS2nd}
Let $f,g\in H^1(\mathbb{R})$ and $\phi,\psi$ be two fractional wavelets then, for $a\in\mathbb{R}-\{0\},$
$$\|(L_\phi^\theta f)(\cdot,a)-(L_\psi^\theta g)(\cdot,a)\|_{H^1(\mathbb{R})}\leq |a|^{\frac{1}{2\theta}}\left(\|f\|_{H^1(\mathbb{R})}\left\|\frac{\phi}{\sqrt{C_{\phi,\theta}}}-\frac{\psi}{
\sqrt{C_{\psi,\theta}}}\right\|_{L^1(\mathbb{R})}+\|f-g\|_{H^1(\mathbb{R})}\left\|\frac{\psi}{\sqrt{C_{\psi,\theta}}}\right\|_{L^1(\mathbb{R})} \right).$$
\end{theorem}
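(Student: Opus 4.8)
The plan is to reduce everything to the single-wavelet bound of Theorem \ref{P2HS1st} by inserting a telescoping middle term. First I would write
$$(L_\phi^\theta f)(\cdot,a) - (L_\psi^\theta g)(\cdot,a) = \left[(L_\phi^\theta f)(\cdot,a) - (L_\psi^\theta f)(\cdot,a)\right] + \left[(L_\psi^\theta f)(\cdot,a) - (L_\psi^\theta g)(\cdot,a)\right],$$
so that the first bracket isolates the change of wavelet with the argument $f$ held fixed, and the second bracket isolates the change of argument with the wavelet $\psi$ held fixed. Applying the triangle inequality for $\|\cdot\|_{H^1(\mathbb{R})}$ reduces the claim to estimating these two pieces separately.

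For the second bracket I would use linearity in the argument: since $L_\psi^\theta = \frac{1}{\sqrt{C_{\psi,\theta}}} W_\psi^\theta$ and $W_\psi^\theta$ is linear in its argument, we have $(L_\psi^\theta f)(\cdot,a) - (L_\psi^\theta g)(\cdot,a) = (L_\psi^\theta(f-g))(\cdot,a)$. Theorem \ref{P2HS1st} then yields directly
$$\|(L_\psi^\theta(f-g))(\cdot,a)\|_{H^1(\mathbb{R})} \le |a|^{\frac{1}{2\theta}}\left\|\frac{\psi}{\sqrt{C_{\psi,\theta}}}\right\|_{L^1(\mathbb{R})}\|f-g\|_{H^1(\mathbb{R})},$$
which is exactly the second summand on the right-hand side.

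For the first bracket the key observation is that the transform is linear in its (conjugated) wavelet. Because $\eta \mapsto \eta_{a,b,\theta}$ is linear and $C_{\phi,\theta}, C_{\psi,\theta}$ are positive reals, writing $\tilde\phi = \phi/\sqrt{C_{\phi,\theta}}$ and $\tilde\psi = \psi/\sqrt{C_{\psi,\theta}}$ gives
$$(L_\phi^\theta f)(b,a) - (L_\psi^\theta f)(b,a) = \int_{\mathbb{R}} f(t)\,\overline{(\tilde\phi - \tilde\psi)_{a,b,\theta}(t)}\,dt = \left(W_{\tilde\phi - \tilde\psi}^\theta f\right)(b,a).$$
I would then run the identical computation as in the proof of Theorem \ref{P2HS1st}, now with $\tilde\phi - \tilde\psi \in L^1(\mathbb{R})$ playing the role of the wavelet: convolving with $\eta_t$, moving the supremum over $t>0$ inside the $x$-integral, applying Tonelli's theorem to swap the $b$- and $x$-integrations, and using translation invariance in $b$ to recognize the inner integral as $\|f\|_{H^1(\mathbb{R})}$. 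This produces
$$\|(L_\phi^\theta f)(\cdot,a) - (L_\psi^\theta f)(\cdot,a)\|_{H^1(\mathbb{R})} \le |a|^{\frac{1}{2\theta}}\left\|\frac{\phi}{\sqrt{C_{\phi,\theta}}} - \frac{\psi}{\sqrt{C_{\psi,\theta}}}\right\|_{L^1(\mathbb{R})}\|f\|_{H^1(\mathbb{R})},$$
and adding the two bounds gives the asserted inequality.

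The one point requiring care — and the only place the argument is not purely formal — is that $\tilde\phi - \tilde\psi$ need not itself be a fractional wavelet, since its admissibility constant may be infinite, so Theorem \ref{P2HS1st} cannot be quoted verbatim for the first bracket. However, inspecting that theorem's proof shows admissibility is never invoked: the entire estimate rests only on the hypothesis that the kernel lies in $L^1(\mathbb{R})$, which holds here because $\phi,\psi \in L^1(\mathbb{R})$ forces $\tilde\phi - \tilde\psi \in L^1(\mathbb{R})$. I would therefore either restate the boundedness estimate for an arbitrary $L^1(\mathbb{R})$ kernel or simply remark that the proof of Theorem \ref{P2HS1st} applies unchanged to $\tilde\phi - \tilde\psi$.
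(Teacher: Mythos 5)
Your proposal is correct and takes essentially the same route as the paper: the same telescoping decomposition and triangle inequality, with the first piece bounded by rerunning the single-kernel computation of Theorem \ref{P2HS1st} with kernel $\tilde\phi-\tilde\psi$ (exactly what the paper does explicitly) and the second piece handled by linearity, which is what the paper's ``similarly'' amounts to. One small inaccuracy in your closing caveat: the admissibility constant of $\tilde\phi-\tilde\psi$ cannot actually be infinite, since $|\mathfrak{F}_\theta(\tilde\phi-\tilde\psi)|^2\leq 2|\mathfrak{F}_\theta\tilde\phi|^2+2|\mathfrak{F}_\theta\tilde\psi|^2$ gives $C_{\tilde\phi-\tilde\psi,\theta}\leq 2(C_{\tilde\phi,\theta}+C_{\tilde\psi,\theta})=4$, but your caution is still warranted for a different reason --- quoting Theorem \ref{P2HS1st} verbatim would introduce the unwanted normalization $1/\sqrt{C_{\tilde\phi-\tilde\psi,\theta}}$ (and fails if $\tilde\phi=\tilde\psi$, as wavelets are non-zero) --- so your fix of observing that the proof uses only $\tilde\phi-\tilde\psi\in L^1(\mathbb{R})$ is the right one.
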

\begin{proof}
We have
\begin{equation}\label{P2eqn29}
\|(L_\phi^\theta f)(\cdot,a)-(L_\psi^\theta g)(\cdot,a)\|_{H^1(\mathbb{R})}\leq \|(L_\phi^\theta f)(\cdot,a)-(L_\psi^\theta f)(\cdot,a)\|_{H^1(\mathbb{R})}+\|(L_\psi^\theta f)(\cdot,a)-(L_\psi^\theta g)(\cdot,a)\|_{H^1(\mathbb{R})}.
\end{equation}
Now,
\begin{eqnarray*}
 (L_\phi^\theta f)(b,a)-(L_\psi^\theta f)(b,a)&=&\frac{1}{|a|^{\frac{1}{2\theta}}}\int_{\mathbb{R}}f(t)\left\{\overline{\frac{1}{\sqrt{C_{\phi,\theta}}}\phi\left(\frac{t-b}{(\sgn a)|a|^{\frac{1}{\theta}}}\right)}-\overline{\frac{1}{\sqrt{C_{\psi,\theta}}}\psi\left(\frac{t-b}{(\sgn a)|a|^{\frac{1}{\theta}}}\right)}\right\}dt\\
&=& |a|^{\frac{1}{2\theta}}\int_{\mathbb{R}}f\left((\sgn a)|a|^{\frac{1}{\theta}}x+b\right)\left(\overline{\frac{\phi(x)}{\sqrt{C_{\phi,\theta}}}-\frac{\psi(x)}{
\sqrt{C_{\psi,\theta}}}}\right)dx.\\
\end{eqnarray*}
Observe that
\begin{eqnarray*}
\left\{\left((L_\phi^\theta f)(\cdot,a)-(L_\phi^\theta f)(\cdot,a)\right)\star\eta_t(\cdot)\right\}(b)&=&\int_{\mathbb{R}}\left\{\left((L_\phi^\theta f)(b-y,a)-(L_\phi^\theta f)(b-y,a)\right)\right\}\eta_t(y)dy\\
&=&\int_{\mathbb{R}}\left(|a|^{\frac{1}{2\theta}}\int_{\mathbb{R}}f\left((\sgn a)|a|^{\frac{1}{\theta}}x+b-y\right)\left(\overline{\frac{\phi(x)}{\sqrt{C_{\phi,\theta}}}-\frac{\psi(x)}{
\sqrt{C_{\psi,\theta}}}}\right)dx\right)\eta_t(y)dy\\
&=&|a|^{\frac{1}{2\theta}}\int_{\mathbb{R}}\left(\overline{\frac{\phi(x)}{\sqrt{C_{\phi,\theta}}}-\frac{\psi(x)}{
\sqrt{C_{\psi,\theta}}}}\right)\left(\int_{\mathbb{R}}f\left((\sgn a)|a|^{\frac{1}{\theta}}x+b-y\right)\eta_t(y)dy\right)dx\\
&=&|a|^{\frac{1}{2\theta}}\int_{\mathbb{R}}\left(f\star\eta_t\right)\left((\sgn a)|a|^{\frac{1}{\theta}}x+b\right)\left(\overline{\frac{\phi(x)}{\sqrt{C_{\phi,\theta}}}-\frac{\psi(x)}{
\sqrt{C_{\psi,\theta}}}}\right)dx.
\end{eqnarray*}
Therefore,
\begin{eqnarray}\label{P2eqn30}
\|(L_\phi^\theta f)(\cdot,a)-(L_\psi^\theta f)(\cdot,a)\|_{H^1(\mathbb{R})}\notag &=&\int_{\mathbb{R}}\sup_{t>0}\left|\left\{\left((L_\phi^\theta f)(\cdot,a)-(L_\phi^\theta f)(\cdot,a)\right)\star\eta_t(\cdot)\right\}(b)\right|db\\
\notag 
&\leq &|a|^{\frac{1}{2\theta}}\int_{\mathbb{R}}\left|\frac{\phi(x)}{\sqrt{C_{\phi,\theta}}}-\frac{\psi(x)}{
\sqrt{C_{\psi,\theta}}}\right|\left(\int_{\mathbb{R}}\sup_{t>0}|\left(f\star\eta_t\right)\left((\sgn a)|a|^{\frac{1}{\theta}}x+b\right)|db\right)dx\\
&=& |a|^{\frac{1}{2\theta}}\|f\|_{H^1(\mathbb{R})}\left\|\frac{\phi}{\sqrt{C_{\phi,\theta}}}-\frac{\psi}{
\sqrt{C_{\psi,\theta}}}\right\|_{L^1(\mathbb{R})}.
\end{eqnarray}
Similarly, it can be shown that
\begin{equation}\label{P2eqn31}
\|(L_\psi^\theta f)(\cdot,a)-(L_\psi^\theta g)(\cdot,a)\|_{H^1(\mathbb{R})}\leq |a|^{\frac{1}{2\theta}}\|f-g\|_{H^1(\mathbb{R})}\left\|\frac{\psi}{\sqrt{C_{\psi,\theta}}}\right\|_{L^1(\mathbb{R})}
\end{equation}
From equations (\ref{P2eqn29}),(\ref{P2eqn30}) and (\ref{P2eqn31}) the result follows.
\end{proof}

\begin{remark}
For $\theta=1$ the theorem \ref{P2HS1st} and theorem \ref{P2HS2nd}  coincide with those studied in \cite{chuong2013boundedness}.
\end{remark}
\subsection{Morrey space}
\begin{defn}(\cite{almeida2017approximation})
The Morrey space $L^{p,\nu}_{M}(\mathbb{R}),$ with $1\leq p<\infty$ and $0\leq \nu\leq 1,$ defined by
\begin{equation*}
L^{p,\nu}_{M}(\mathbb{R})=\left\{f\in L_{loc}^p(\mathbb{R}): \sup_{\substack{x\in\mathbb{R}\\r>0}}\left(\frac{1}{r^{\nu}}\int_{B(x,r)}|f(t)|^pdt\right)<\infty\right\},
\end{equation*}
is a Banach space normed by
\begin{equation*}
\|f\|_{L^{p,\nu}_{M}(\mathbb{R})}=\sup_{\substack{x\in\mathbb{R}\\r>0}}\left(\frac{1}{r^{\nu}}\int_{B(x,r)}|f(t)|^pdt\right)^{\frac{1}{p}}.
\end{equation*}
\end{defn}

We now study some properties of the CFrWT on the Morrey space $L^{1,\nu}_{M}(\mathbb{R}).$ Before we establish the boundedness of the CFrWT on the Morrey space we prove the following lemma.
\begin{lemma}\label{P2}
Let $a\in\mathbb{R}-\{0\},$ $f\in L^{1,\nu}_{M}(\mathbb{R})$ and $\psi$ be a compactly supported fractional wavelet, then $(L_\psi^\theta f)(\cdot,a)$ is in $L^1_{loc}(\mathbb{R}).$
\end{lemma}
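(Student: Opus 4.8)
The plan is to show that $(L_\psi^\theta f)(\cdot,a)$ is integrable on every bounded interval $[-R,R]$, $R>0$, since a function lies in $L^1_{loc}(\mathbb{R})$ precisely when it is integrable over every compact set and each compact set sits inside some such interval. I would start from the change-of-variables form of the normalized transform already used in the Hardy-space computation, namely
\[
(L_\psi^\theta f)(b,a)=\frac{|a|^{\frac{1}{2\theta}}}{\sqrt{C_{\psi,\theta}}}\int_{\mathbb{R}}f\left((\sgn a)|a|^{\frac{1}{\theta}}x+b\right)\overline{\psi(x)}\,dx,
\]
together with the pointwise bound $|(L_\psi^\theta f)(b,a)|\leq \frac{|a|^{\frac{1}{2\theta}}}{\sqrt{C_{\psi,\theta}}}\int_{\mathbb{R}}|f((\sgn a)|a|^{\frac{1}{\theta}}x+b)|\,|\psi(x)|\,dx$. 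Because $\psi$ is compactly supported, say $\operatorname{supp}\psi\subseteq[-K,K]$, the inner integral runs only over $[-K,K]$, a set of finite measure.

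Next I would integrate this bound in $b$ over $[-R,R]$ and interchange the order of integration by Tonelli's theorem, which is legitimate as the integrand is nonnegative and measurable. For each fixed $x$, the substitution $u=(\sgn a)|a|^{\frac{1}{\theta}}x+b$ turns the inner $b$-integral into $\int_{B((\sgn a)|a|^{\frac{1}{\theta}}x,\,R)}|f(u)|\,du$, the integral of $|f|$ over the interval (ball) of radius $R$ centred at $(\sgn a)|a|^{\frac{1}{\theta}}x$.

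The decisive step is then the Morrey estimate: with $p=1$, the definition of $\|\cdot\|_{L^{1,\nu}_M(\mathbb{R})}$ gives $\int_{B(y,R)}|f(u)|\,du\leq R^{\nu}\|f\|_{L^{1,\nu}_M(\mathbb{R})}$ uniformly in the centre $y$. Feeding this uniform-in-centre bound back and using $\int_{-K}^{K}|\psi(x)|\,dx=\|\psi\|_{L^1(\mathbb{R})}$ yields
\[
\int_{-R}^{R}|(L_\psi^\theta f)(b,a)|\,db\leq \frac{|a|^{\frac{1}{2\theta}}}{\sqrt{C_{\psi,\theta}}}\,R^{\nu}\,\|f\|_{L^{1,\nu}_M(\mathbb{R})}\,\|\psi\|_{L^1(\mathbb{R})}<\infty,
\]
which proves the claim.

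I expect the only genuine subtlety to be the Tonelli interchange and the accompanying fact that the defining integral of $(L_\psi^\theta f)(b,a)$ converges for almost every $b$; both are handled by the same uniform Morrey bound, since its independence of the centre of the ball is exactly what keeps the moving interval $B((\sgn a)|a|^{\frac{1}{\theta}}x,R)$ from causing divergence. The compact support of $\psi$ plays the supporting role of keeping the $x$-integration over a finite interval, so that no growth of $f$ at infinity is felt.
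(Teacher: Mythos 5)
Your proof is correct, but it takes a genuinely different route from the paper's. The paper bounds $\int_K|(L_\psi^\theta f)(b,a)|\,db$ by observing that, as $x$ ranges over $\operatorname{supp}\psi$, the translated set $A=(\sgn a)|a|^{\frac{1}{\theta}}x+K$ stays inside the fixed bounded set $(\sgn a)|a|^{\frac{1}{\theta}}\operatorname{supp}\psi+K$, and then invokes only the local integrability $f\in L^1_{loc}(\mathbb{R})$ (which membership in $L^{1,\nu}_M(\mathbb{R})$ provides by definition) to obtain a uniform constant $C$ with $\int_A|f|\leq C$; there the compact support of $\psi$ does real work, since without it the moving set escapes every bounded region and local integrability alone gives no uniform control. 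You instead invoke the full strength of the Morrey norm, namely $\int_{B(y,R)}|f(u)|\,du\leq R^{\nu}\|f\|_{L^{1,\nu}_M(\mathbb{R})}$ uniformly in the centre $y$ --- which is exactly the estimate the paper defers to its boundedness result, theorem \ref{P2theo4.4}, at equation (\ref{P2eqn36}). Your route buys two things the paper's lemma does not: an explicit quantitative bound $\frac{|a|^{1/2\theta}}{\sqrt{C_{\psi,\theta}}}R^{\nu}\|f\|_{L^{1,\nu}_M(\mathbb{R})}\|\psi\|_{L^1(\mathbb{R})}$ on each interval $[-R,R]$ (essentially theorem \ref{P2theo4.4} specialized to balls centred at the origin), and the dispensability of the compact-support hypothesis, since the uniformity in the centre means $\psi\in L^1(\mathbb{R})$ alone suffices. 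One small correction: your closing remark that the compact support of $\psi$ serves to keep the $x$-integration over a finite interval misdescribes your own argument --- nowhere do you use $\operatorname{supp}\psi\subseteq[-K,K]$, as the $x$-integral over all of $\mathbb{R}$ is already controlled by $\|\psi\|_{L^1(\mathbb{R})}$, so you could state and prove the lemma without that hypothesis; it is the paper's proof, not yours, in which compactness of the support is essential.
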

\begin{proof}
We have
$$(L_\psi^\theta f)(b,a)=\frac{1}{\sqrt{C_{\psi,\theta}}}\int_{\mathbb{R}} f(t)\overline{\psi_{a,b,\theta}(t)}dt.$$
This implies,
\begin{eqnarray*}
|(L_\psi^\theta f)(b,a)|\leq \frac{1}{\sqrt{C_{\psi,\theta}}}\int_{\mathbb{R}}|f(t)||\psi_{a,b,\theta}(t)|dt.
\end{eqnarray*}
Using equation (\ref{P2eqn14.}), we get
\begin{align}\label{P2eqn33}
|(L_\psi^\theta f)(b,a)|&\leq  \frac{1}{|a|^{\frac{1}{2\theta}}\sqrt{C_{\psi,\theta}}}\int_{\mathbb{R}}|f(t)||\psi\left(\frac{t-b}{(\sgn a)|a|^{\frac{1}{\theta}}}\right)|dt\notag\\
&=\frac{|a|^{\frac{1}{2\theta}}}{\sqrt{C_{\psi,\theta}}}\int_{\mathbb{R}}\left|f\left((\sgn a)|a|^\frac{1}{\theta} x+b\right)\right||\psi(x)|dx.
\end{align}
Let $K$ be a compact set in $\mathbb{R}.$ We have
\begin{align*}
\int_{K}|(L_\psi^\theta f)(b,a)|db&\leq \frac{|a|^\frac{1}{2\theta}}{\sqrt{C_{\psi,\theta}}}\int_{K}\int_{\mathbb{R}}\left|f\left((\sgn a)|a|^\frac{1}{\theta} x+b\right)\right||\psi(x)|dx db\\
&=\frac{|a|^\frac{1}{2\theta}}{\sqrt{C_{\psi,\theta}}}\int_{\mathbb{R}}|\psi(x)|\left(\int_{K}\left|f\left(\sgn a|a|^\frac{1}{\theta} x+b\right)\right|db\right) dx\\
&=\frac{|a|^\frac{1}{2\theta}}{\sqrt{C_{\psi,\theta}}}\int_{\mathbb{R}}|\psi(x)|\left(\int_{A}|f(y)|dy\right)dx,
\end{align*}
where $A=(\sgn a)|a|^{\frac{1}{\theta}}x+K\subset(\sgn a)|a|^{\frac{1}{\theta}}\mbox{supp}~\psi+K.$ Since $f\in L^1_{loc}(\mathbb{R})$ and $A$ is bounded, we have
$$\int_{K}|(L_\psi^\theta f)(b,a)|db \leq \frac{|a|^\frac{1}{2\theta}}{\sqrt{C_{\psi,\theta}}}C\|\psi\|_{L^1(\mathbb{R})},$$ for some $C\geq 0.$ Thus, it follow that $(L_\psi^\theta f)(\cdot,a)$ is in $L^1_{loc}(\mathbb{R}).$
\end{proof}
\begin{theorem}\label{P2theo4.4}
Let $a\in\mathbb{R}-\{0\}$ and $\psi$ be a compactly supported fractional wavelet, then the operator $L_\psi^\theta:L^{1,\nu}_{M}(\mathbb{R})\rightarrow L^{1,\nu}_{M}(\mathbb{R})$ defined by $f\rightarrow (L_\psi^\theta f)(\cdot,a)$ is bounded. Furthermore,
$$\|(L_\psi^\theta f)(\cdot,a)\|_{L^{1,\nu}_{M}(\mathbb{R})}\leq\frac{|a|^\frac{1}{2\theta}}{\sqrt{C_{\psi,\theta}}}\|\psi\|_{L^1(\mathbb{R})}\|f\|_{L^{1,\nu}_{M}(\mathbb{R})}.$$
\end{theorem}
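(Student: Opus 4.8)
The plan is to start from the pointwise estimate (\ref{P2eqn33}) already established in Lemma \ref{P2}, namely
$$|(L_\psi^\theta f)(b,a)|\leq\frac{|a|^{\frac{1}{2\theta}}}{\sqrt{C_{\psi,\theta}}}\int_{\mathbb{R}}\left|f\left((\sgn a)|a|^\frac{1}{\theta}x+b\right)\right||\psi(x)|dx,$$
and to control the Morrey norm of $(L_\psi^\theta f)(\cdot,a)$ directly from its definition. I would fix a center $x_0\in\mathbb{R}$ and a radius $r>0$, integrate the pointwise bound over the ball $B(x_0,r)$, divide by $r^\nu$, and then invoke Tonelli's theorem (all integrands being nonnegative) to interchange the $b$- and $x$-integrations, obtaining
$$\frac{1}{r^\nu}\int_{B(x_0,r)}|(L_\psi^\theta f)(b,a)|db\leq\frac{|a|^{\frac{1}{2\theta}}}{\sqrt{C_{\psi,\theta}}}\int_{\mathbb{R}}|\psi(x)|\left(\frac{1}{r^\nu}\int_{B(x_0,r)}\left|f\left((\sgn a)|a|^\frac{1}{\theta}x+b\right)\right|db\right)dx.$$

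The crucial step is to handle the inner integral. For each fixed $x$, I would perform the change of variable $y=(\sgn a)|a|^\frac{1}{\theta}x+b$, under which $db=dy$ and the ball $B(x_0,r)$ in the variable $b$ is carried onto the ball $B(x_0+(\sgn a)|a|^\frac{1}{\theta}x,r)$ in the variable $y$ of the same radius $r$. Hence the inner integral equals
$$\frac{1}{r^\nu}\int_{B(x_0+(\sgn a)|a|^\frac{1}{\theta}x,r)}|f(y)|dy,$$
which, being precisely an average of the form appearing in the definition of the Morrey norm over a ball of radius $r$, is bounded by $\|f\|_{L^{1,\nu}_M(\mathbb{R})}$ uniformly in $x$, $x_0$ and $r$. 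This is exactly where the translation invariance built into the supremum over all centers in the Morrey norm is used: shifting the center by $(\sgn a)|a|^\frac{1}{\theta}x$ leaves the radius unchanged, so the shifted average is still dominated by the supremum defining $\|f\|_{L^{1,\nu}_M(\mathbb{R})}$.

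Substituting this bound back and using $\int_{\mathbb{R}}|\psi(x)|dx=\|\psi\|_{L^1(\mathbb{R})}$ yields
$$\frac{1}{r^\nu}\int_{B(x_0,r)}|(L_\psi^\theta f)(b,a)|db\leq\frac{|a|^{\frac{1}{2\theta}}}{\sqrt{C_{\psi,\theta}}}\|\psi\|_{L^1(\mathbb{R})}\|f\|_{L^{1,\nu}_M(\mathbb{R})}.$$
Since the right-hand side is independent of $x_0$ and $r$, taking the supremum over all $x_0\in\mathbb{R}$ and $r>0$ gives the claimed estimate, and in particular confirms that $(L_\psi^\theta f)(\cdot,a)\in L^{1,\nu}_M(\mathbb{R})$, so that the operator is bounded. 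There is no genuine obstacle here beyond careful bookkeeping: the only points requiring attention are the legitimacy of the Tonelli interchange and the fact that the compact support of $\psi$ together with Lemma \ref{P2} guarantees local integrability, so that all quantities involved are finite.
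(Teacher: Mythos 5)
Your proposal is correct and follows essentially the same route as the paper's own proof: integrate the pointwise bound (\ref{P2eqn33}) over $B(x,r)$, apply Tonelli, translate the ball to recognize the Morrey average as in (\ref{P2eqn35})--(\ref{P2eqn36}), and take the supremum. Your added remark that the compact support of $\psi$ enters only through Lemma \ref{P2} (local integrability), while the norm estimate itself needs only $\psi\in L^1(\mathbb{R})$, is an accurate observation consistent with the paper's argument.
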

\begin{proof}
We have
\begin{equation}\label{P2eqn34}
\|(L_\psi^\theta f)(\cdot,a)\|_{L^{1,\nu}_{M}(\mathbb{R})}=\sup_{\substack{x\in\mathbb{R}\\r>0}}\left(\frac{1}{r^{\nu}}\int_{B(x,r)}|(L_\psi^\theta f)(b,a)|db\right).
\end{equation}
Now using equation (\ref{P2eqn33}), we get
\begin{eqnarray}\label{P2eqn35}
\frac{1}{r^{\nu}}\int_{B(x,r)}|(L_\psi^\theta f)(b,a)|db&\leq &\frac{|a|^\frac{1}{2\theta}}{r^{\nu}\sqrt{C_{\psi,\theta}}}\int_{B(x,r)}\left(\int_{\mathbb{R}}\left|f\left((\sgn a)|a|^\frac{1}{\theta} u+b\right)\right||\psi(u)|du\right)db\notag\\
&= &\frac{|a|^\frac{1}{2\theta}}{r^{\nu}\sqrt{C_{\psi,\theta}}}\int_{\mathbb{R}}|\psi(u)|\left(\int_{B(x,r)}\left|f\left((\sgn a)|a|^\frac{1}{\theta} u+b\right)\right|db\right)du\notag\\
&=&\frac{|a|^\frac{1}{2\theta}}{\sqrt{C_{\psi,\theta}}}\int_{\mathbb{R}}|\psi(u)|\left(\frac{1}{r^{\nu}}\int_{B\left((\sgn a)|a|^\frac{1}{\theta}u+x,r\right)}|f(\boldsymbol z)|d\boldsymbol z\right)du.
\end{eqnarray}
Also 
\begin{eqnarray}\label{P2eqn36}
\frac{1}{r^{\nu}}\int_{B\left((\sgn a)|a|^{\frac{1}{\theta}}u+x,r\right)}|f(\boldsymbol z)|d\boldsymbol z &\leq& \|f\|_{L^{1,\nu}_{M}(\mathbb{R}).}
\end{eqnarray}
From equations (\ref{P2eqn35}) and (\ref{P2eqn36}), we have
$$\frac{1}{r^{\nu}}\int_{B(x,r)}|(L_\psi^\theta f)(b,a)|db\leq\frac{|a|^\frac{1}{2\theta}}{\sqrt{C_{\psi,\theta}}}\|f\|_{L^{1,\nu}_{M}(\mathbb{R})}\|\psi\|_{L^1(\mathbb{R})},$$
which gives
\begin{equation}\label{P2eqn37}
\sup_{\substack{x\in\mathbb{R}\\r>0}}\left(\frac{1}{r^{\nu}}\int_{B(x,r)}|(L_\psi^\theta f)(b,a)|db\right)\leq\frac{|a|^\frac{1}{2\theta}}{\sqrt{C_{\psi,\theta}}}\|f\|_{L^{1,\nu}_{M}(\mathbb{R})}\|\psi\|_{L^1(\mathbb{R})}.
\end{equation}
From (\ref{P2eqn34}) and (\ref{P2eqn37}), we have
$$\|(L_\psi^\theta f)(\cdot,a)\|_{L^{1,\nu}_{M}(\mathbb{R})}\leq\frac{|a|^\frac{1}{2\theta}}{\sqrt{C_{\psi,\theta}}}\|\psi\|_{L^1(\mathbb{R})}\|f\|_{L^{1,\nu}_{M}(\mathbb{R})}.$$
This completes the proof.
\end{proof}
\begin{corollary}\label{P2corollary}
Let $a\in\mathbb{R}-\{0\},$ $f\in L^{1,\nu}_{M}(\mathbb{R}),$ and $\psi$ be a compactly supported fractional wavelet, then $$\|(L_\psi^\theta f)(\cdot,a)\|_{L^{1,\nu}_{M}(\mathbb{R})}=O\left({|a|^\frac{1}{2\theta}}\right).$$ 
\end{corollary}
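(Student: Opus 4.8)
The plan is to read off the result directly from Theorem \ref{P2theo4.4}, which has already supplied the decisive quantitative estimate. That theorem asserts, under exactly the hypotheses imposed here (namely $a\in\mathbb{R}-\{0\}$, $f\in L^{1,\nu}_{M}(\mathbb{R})$, and $\psi$ a compactly supported fractional wavelet), that
$$\|(L_\psi^\theta f)(\cdot,a)\|_{L^{1,\nu}_{M}(\mathbb{R})}\leq\frac{|a|^{\frac{1}{2\theta}}}{\sqrt{C_{\psi,\theta}}}\|\psi\|_{L^1(\mathbb{R})}\|f\|_{L^{1,\nu}_{M}(\mathbb{R})}.$$
So the first and essentially only step is to recognize that the entire prefactor multiplying $|a|^{\frac{1}{2\theta}}$ is a finite constant independent of the dilation parameter $a$.

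To justify finiteness, I would note: $C_{\psi,\theta}>0$ because $\psi$ is a nonzero fractional wavelet, and $C_{\psi,\theta}<\infty$ by Definition \ref{defnAC}, so $1/\sqrt{C_{\psi,\theta}}$ is a well-defined positive number; $\|\psi\|_{L^1(\mathbb{R})}<\infty$ since any fractional wavelet belongs to $L^1(\mathbb{R})$; and $\|f\|_{L^{1,\nu}_{M}(\mathbb{R})}<\infty$ by the assumption $f\in L^{1,\nu}_{M}(\mathbb{R})$. Setting
$$C:=\frac{1}{\sqrt{C_{\psi,\theta}}}\|\psi\|_{L^1(\mathbb{R})}\|f\|_{L^{1,\nu}_{M}(\mathbb{R})}<\infty,$$
the estimate of Theorem \ref{P2theo4.4} reads $\|(L_\psi^\theta f)(\cdot,a)\|_{L^{1,\nu}_{M}(\mathbb{R})}\leq C\,|a|^{\frac{1}{2\theta}}$, which is precisely the assertion $\|(L_\psi^\theta f)(\cdot,a)\|_{L^{1,\nu}_{M}(\mathbb{R})}=O\!\left(|a|^{\frac{1}{2\theta}}\right)$ by the definition of the big-$O$ symbol. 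This parallels exactly how the earlier corollary for $H^1(\mathbb{R})$ was deduced from Theorem \ref{P2HS1st}.

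There is no genuine obstacle here: the corollary is a cosmetic restatement of the boundedness theorem that isolates the $a$-dependence. The only point that even warrants a remark is confirming that the constant $C$ does not secretly depend on $a$, which it manifestly does not since $C_{\psi,\theta}$, $\|\psi\|_{L^1(\mathbb{R})}$, and $\|f\|_{L^{1,\nu}_{M}(\mathbb{R})}$ are all independent of $a$. Hence the proof amounts to a single invocation of Theorem \ref{P2theo4.4} followed by the observation that the multiplicative constant is finite and $a$-free.
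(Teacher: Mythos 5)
Your proposal is correct and matches the paper's own proof, which simply states that the corollary follows from Theorem \ref{P2theo4.4}; you merely spell out the (obvious) finiteness and $a$-independence of the constant $\frac{1}{\sqrt{C_{\psi,\theta}}}\|\psi\|_{L^1(\mathbb{R})}\|f\|_{L^{1,\nu}_{M}(\mathbb{R})}$, which the paper leaves implicit.
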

\begin{proof}
Follows from theorem \ref{P2theo4.4}.
\end{proof}
We will now determine the $L^{1,\nu}_{M}(\mathbb{R})-$distance of two CFrWTs with different fractional wavelets and different argument functions to study the dependence of the transform on its fractional wavelet and its argument.
\begin{theorem}\label{P2theo}
Let $f,g\in L^{1,\nu}_{M}(\mathbb{R})$ and $\phi,\psi$ be two compactly supported fractional wavelets. Then for $a\in\mathbb{R}-\{0\},$
$$\|(L_\phi^\theta f)(\cdot,a)-(L_\psi^\theta g)(\cdot,a)\|_{L^{1,\nu}_{M}(\mathbb{R})}\leq |a|^\frac{1}{2\theta}\left(\|f\|_{L^{1,\nu}_{M}(\mathbb{R})}\left\|\frac{\phi}{\sqrt{C_{\phi,\theta}}}-\frac{\psi}{\sqrt{C_{\psi,\theta}}}\right\|_{L^1(\mathbb{R})}+\|f-g\|_{L^{1,\nu}_{M}(\mathbb{R})}\left\|\frac{\psi}{\sqrt{C_{\psi,\theta}}}\right\|_{L^1(\mathbb{R})}\right).$$
\end{theorem}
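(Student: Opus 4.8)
The plan is to mirror the Hardy-space argument of Theorem~\ref{P2HS2nd}, splitting the difference by the triangle inequality into a piece where only the wavelet changes and a piece where only the argument changes. Concretely, I would first write
$$\|(L_\phi^\theta f)(\cdot,a)-(L_\psi^\theta g)(\cdot,a)\|_{L^{1,\nu}_{M}(\mathbb{R})}\leq \|(L_\phi^\theta f)(\cdot,a)-(L_\psi^\theta f)(\cdot,a)\|_{L^{1,\nu}_{M}(\mathbb{R})}+\|(L_\psi^\theta f)(\cdot,a)-(L_\psi^\theta g)(\cdot,a)\|_{L^{1,\nu}_{M}(\mathbb{R})},$$
and then estimate the two summands separately.

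For the second summand I would use the linearity of $L_\psi^\theta$ in its argument to write $(L_\psi^\theta f)(\cdot,a)-(L_\psi^\theta g)(\cdot,a)=(L_\psi^\theta (f-g))(\cdot,a)$. Since $f-g\in L^{1,\nu}_{M}(\mathbb{R})$ and $\psi$ is a compactly supported fractional wavelet, Theorem~\ref{P2theo4.4} applies verbatim and yields
$$\|(L_\psi^\theta f)(\cdot,a)-(L_\psi^\theta g)(\cdot,a)\|_{L^{1,\nu}_{M}(\mathbb{R})}\leq |a|^{\frac{1}{2\theta}}\|f-g\|_{L^{1,\nu}_{M}(\mathbb{R})}\left\|\frac{\psi}{\sqrt{C_{\psi,\theta}}}\right\|_{L^1(\mathbb{R})},$$
which is exactly the second term on the right-hand side.

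For the first summand I would exploit the fact that $L_\phi^\theta$ and $L_\psi^\theta$ are linear in the wavelet. Setting $\chi:=\frac{\phi}{\sqrt{C_{\phi,\theta}}}-\frac{\psi}{\sqrt{C_{\psi,\theta}}}$ and changing variables $t=(\sgn a)|a|^{\frac{1}{\theta}}x+b$ as in the derivation of (\ref{P2eqn33}), I would obtain
$$(L_\phi^\theta f)(b,a)-(L_\psi^\theta f)(b,a)=|a|^{\frac{1}{2\theta}}\int_{\mathbb{R}}f\left((\sgn a)|a|^{\frac{1}{\theta}}x+b\right)\overline{\chi(x)}\,dx.$$
The key point here, and the only place requiring care, is that I cannot simply quote Theorem~\ref{P2theo4.4} for this term, because $\chi$ need not be an admissible fractional wavelet (it is a difference of two, with the normalizing constants already absorbed). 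However, inspecting the proof of Theorem~\ref{P2theo4.4} shows that admissibility is never used in the Morrey estimate: the chain (\ref{P2eqn35})--(\ref{P2eqn37}) relies only on compact support, which guarantees $L^1$-membership and justifies the Tonelli interchange, together with the $L^1$-norm of the generating function. Since $\phi$ and $\psi$ are compactly supported, so is $\chi$, and $\chi\in L^1(\mathbb{R})$. Repeating the estimate (\ref{P2eqn35})--(\ref{P2eqn37}) with $|\chi|$ in place of $|\psi|/\sqrt{C_{\psi,\theta}}$ then gives
$$\|(L_\phi^\theta f)(\cdot,a)-(L_\psi^\theta f)(\cdot,a)\|_{L^{1,\nu}_{M}(\mathbb{R})}\leq |a|^{\frac{1}{2\theta}}\|f\|_{L^{1,\nu}_{M}(\mathbb{R})}\left\|\frac{\phi}{\sqrt{C_{\phi,\theta}}}-\frac{\psi}{\sqrt{C_{\psi,\theta}}}\right\|_{L^1(\mathbb{R})}.$$

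Substituting the two bounds back into the triangle inequality produces the claimed estimate. The main obstacle is precisely the observation just made about the first summand: one must recognize that the boundedness mechanism of Theorem~\ref{P2theo4.4} survives for a non-admissible, merely compactly supported $L^1$ generator, so that the same translation-and-Tonelli argument can be reused for $\chi$ even though $\chi$ itself is not a fractional wavelet.
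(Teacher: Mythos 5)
Your proposal is correct and follows essentially the same route as the paper's proof: the same triangle-inequality splitting, and for the wavelet-difference term the same change of variables, Tonelli interchange, and translated-ball bound (\ref{P2eqn36}), which the paper carries out explicitly with the function $\frac{\phi}{\sqrt{C_{\phi,\theta}}}-\frac{\psi}{\sqrt{C_{\psi,\theta}}}$ (your $\chi$), never invoking admissibility of that difference. Your only streamlining is to dispatch the second summand via linearity and Theorem \ref{P2theo4.4}, where the paper writes ``similarly''; this amounts to the identical computation.
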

\begin{proof}
We have
\begin{align}\label{P2eqn38.}
\notag\|(L_\phi^\theta f)(\cdot,a)-(L_\psi^\theta g)(\cdot,a)&\|_{L^{1,\nu}_{M}(\mathbb{R})}\\
&\leq\|(L_\phi^\theta f)(\cdot,a)-(L_\psi^\theta f)(\cdot,a)\|_{L^{1,\nu}_{M}(\mathbb{R})}+\|(L_\psi^\theta f)(\cdot,a)-(L_\psi^\theta g)(\cdot,a)\|_{L^{1,\nu}_{M}(\mathbb{R})}.
\end{align}
Here, 
\begin{equation}{\label{P2eqn38}}
\|(L_\phi^\theta f)(\cdot,a)-(L_\psi^\theta f)(\cdot,a)\|_{L^{1,\nu}_{M}(\mathbb{R})}=\sup_{\substack{x\in\mathbb{R}\\r>0}}\left(\frac{1}{r^{\nu}}\int_{B(x,r)}\left|\int_{\mathbb{R}}\left(\frac{1}{\sqrt{C_{\phi,\theta}}}f(y)\overline{\phi_{a,t,\theta}(y)}-\frac{1}{\sqrt{C_{\psi,\theta}}}f(y)\overline{\psi_{a,t,\theta}(y)}\right)dy\right| dt\right).
\end{equation}
Now,
\begin{align*}
\frac{1}{r^{\nu}}\int_{B(x,r)}&\left|\int_{\mathbb{R}}\left(\frac{1}{\sqrt{C_{\phi,\theta}}}f(y)\overline{\phi_{a,t,\theta}(y)}-\frac{1}{\sqrt{C_{\psi,\theta}}}f(y)\overline{\psi_{a,t,\theta}(y)}\right)dy\right| dt \\
&\leq  \frac{1}{r^{\nu}}\int_{B(x,r)}\int_{\mathbb{R}}|f(y)|\left|\left(\frac{1}{\sqrt{C_{\phi,\theta}}}\phi_{a,t,\theta}(y)-\frac{1}{\sqrt{C_{\psi,\theta}}}\psi_{a,t,\theta}(y)\right)\right|dy dt\\
&= \frac{1}{r^{\nu}}\int_{B(x,r)}\bigg(\int_{\mathbb{R}}\frac{1}{|a|^\frac{1}{2\theta}}|f(y)|\left|\frac{1}{\sqrt{C_{\phi,\theta}}}\phi\bigg(\frac{y-t}{(\sgn a)|a|^\frac{1}{\theta}}\bigg)-\frac{1}{\sqrt{C_{\psi,\theta}}}\psi\left(\frac{y-t}{(\sgn a)|a|^\frac{1}{\theta}}\right)\right|dy\bigg) dt\\
&= \frac{|a|^\frac{1}{2\theta}}{r^{\nu}}\int_{B(x,r)}\left(\int_{\mathbb{R}}\left|f\left((\sgn a)|a|^\frac{1}{\theta}z+t\right)\right|\left|\frac{1}{\sqrt{C_{\phi,\theta}}}\phi(z)-\frac{1}{\sqrt{C_{\psi,\theta}}}\psi(z)\right|dz\right) dt\\
&=\frac{|a|^\frac{1}{2\theta}}{r^{\nu}}\int_{\mathbb{R}}\left|\frac{1}{\sqrt{C_{\phi,\theta}}}\phi(z)-\frac{1}{\sqrt{C_{\psi,\theta}}}\psi(z)\right|\left(\int_{B(x,r)}\left|f\left((\sgn a)|a|^\frac{1}{\theta}z+t\right)\right|dt\right)dz\\
&=|a|^\frac{1}{2\theta}\int_{\mathbb{R}}\left|\frac{1}{\sqrt{C_{\phi,\theta}}}\phi(z)-\frac{1}{\sqrt{C_{\psi,\theta}}}\psi(z)\right|\left(\frac{1}{r^{\nu}}\int_{B\left((\sgn a)|a|^\frac{1}{\theta}z+x,r\right)}|f(u)|du\right)dz.
\end{align*}
Using equation (\ref{P2eqn36}), we get
$$\frac{1}{r^{\nu}}\int_{B(x,r)}\left|\int_{\mathbb{R}}\left(\frac{1}{\sqrt{C_{\phi,\theta}}}f(y)\overline{\phi_{a,t,\theta}(y)}-\frac{1}{\sqrt{C_{\phi,\theta}}}f(y)\overline{\psi_{a,t,\theta}(y)}\right)dy\right| dt\leq |a|^\frac{1}{2\theta}\|f\|_{L^{1,\nu}_{M}(\mathbb{R})}\int_{\mathbb{R}}\left|\frac{1}{\sqrt{C_{\phi,\theta}}}\phi(z)-\frac{1}{\sqrt{C_{\psi,\theta}}}\psi(z)\right|dz.$$
Therefore,
\begin{equation}\label{P2eqn39}
\sup_{\substack{x\in\mathbb{R}\\r>0}}\left(\frac{1}{r^{\nu}}\int_{B(x,r)}\left|\int_{\mathbb{R}}\big(f(y)\overline{\phi_{a,t,\theta}(y)}-f(y)\overline{\psi_{a,t,\theta}(y)}\big)dy\right| dt\right)\leq |a|^\frac{1}{2\theta}\|f\|_{L^{1,\nu}_{M}(\mathbb{R})}\left\|\frac{\phi}{\sqrt{C_{\phi,\theta}}}-\frac{\psi}{\sqrt{C_{\phi,\theta}}}\right\|_{L^1(\mathbb{R})}.
\end{equation}
From equation (\ref{P2eqn38}) and equation (\ref{P2eqn39}), it follows that 
\begin{equation}\label{P2eqn40}
\|(L_\phi^\theta f)(\cdot,a)-(L_\psi^\theta f)(\cdot,a)\|_{L^{1,\nu}_{M}(\mathbb{R})}\leq |a|^\frac{1}{2\theta}\|f\|_{L^{1,\nu}_{M}(\mathbb{R})}\left\|\frac{\phi}{\sqrt{C_{\phi,\theta}}}-\frac{\psi}{\sqrt{C_{\psi,\theta}}}\right\|_{L^1(\mathbb{R})}.
\end{equation}
Similarly, it can be shown that 
\begin{equation}\label{P2eqn41}
\|(L_\psi^\theta f)(\cdot,a)-(L_\psi^\theta g)(\cdot,a)\|_{L^{1,\nu}_{M}(\mathbb{R})}\leq |a|^\frac{1}{2\theta}\|f-g\|_{L^{1,\nu}_{M}(\mathbb{R})}\left\|\frac{\psi}{\sqrt{C_{\psi,\theta}}}\right\|_{L^1(\mathbb{R})}
\end{equation}
From equations (\ref{P2eqn38.}), (\ref{P2eqn40}) and (\ref{P2eqn41}) the theorem follows immediately.
\end{proof}
\section{Conclusions}
In this paper, we have studied CFrWT which as a generalization of the classical wavelet transform, reduces to classical wavelet transform for $\theta=1.$ In section 2, we have introduced some basic definitions and results. In section 3, we have generalized the existing result like orthogonality relation, reconstruction formula and the range theorem, in \cite{srivastava2019certain,upadhyayCFrWT}, in the context of two fractional wavelets. Also we have derived the formulas for the CFrWT when the argument function or fractional wavelet is a convolution or correlation of two functions. Lastly, in section 4, the boundedness of CFrWT on Hardy space $H^1(\mathbb{R})$ and  Morrey space $L_M^{1,\nu}(\mathbb{R})$ along with its approximation property are established.  
\section{Acknowledgement} The work is partially supported by UGC File No. 16-9(June 2017)/2018(NET/CSIR), New Delhi, India.

\bibliography{P2MasterS2Revised}

\begin{thebibliography}{10}

\bibitem{kilbas2010fractional}
H.~Martinez A.~A.~Kilbas, Y. F.~Luchko and J.~J. Trujillo.
\newblock Fractional {F}ourier transform in the framework of fractional
  calculus operators.
\newblock {\em Integral Transforms and Special Functions}, 21(10):779--795,
  2010.

\bibitem{prasad2014generalized}
A.~Mahato A.~Prasad, S.~Manna and V.~K. Singh.
\newblock The generalized continuous wavelet transform associated with the
  fractional {F}ourier transform.
\newblock {\em Journal of computational and applied mathematics}, 259:660--671,
  2014.

\bibitem{almeida2017approximation}
A.~Almeida and S.~Samko.
\newblock Approximation in {M}orrey spaces.
\newblock {\em Journal of Functional Analysis}, 272(6):2392--2411, 2017.

\bibitem{almeida1994fractional}
L.B. Almeida.
\newblock The fractional {F}ourier transform and time-frequency
  representations.
\newblock {\em IEEE Transactions on signal processing}, 42(11):3084--3091,
  1994.

\bibitem{chuong2013boundedness}
N.~M. Chuong and D.~V. Duong.
\newblock Boundedness of the wavelet integral operator on weighted function
  spaces.
\newblock {\em Russian Journal of Mathematical Physics}, 20(3):268--275, 2013.

\bibitem{mendlovic1997fractional}
D.~Mas J.~Garc{\'\i}a D.~Mendlovic, Z.~Zalevsky and C.~Ferreira.
\newblock Fractional wavelet transform.
\newblock {\em Applied optics}, 36(20):4801--4806, 1997.

\bibitem{daubechies1992ten}
I.~Daubechies.
\newblock Ten lectures on wavelets, volume 61 of cbms-nsf regional conference
  series in applied mathematics (society for industrial and applied mathematics
  (siam).
\newblock 1992.

\bibitem{guo2018linear}
Y.~Guo and B-Z. Li.
\newblock The linear canonical wavelet transform on some function spaces.
\newblock {\em International Journal of Wavelets, Multiresolution and
  Information Processing}, 16(01):1850010, 2018.

\bibitem{dai2017new}
Z.~Zheng H.~Dai and W.~Wang.
\newblock A new fractional wavelet transform.
\newblock {\em Communications in Nonlinear Science and Numerical Simulation},
  44:19--36, 2017.

\bibitem{srivastava2019certain}
K.~Khatterwani H.~M.~Srivastava and S.~K. Upadhyay.
\newblock A certain family of fractional wavelet transformations.
\newblock {\em Mathematical Methods in the Applied Sciences}, 42(9):3103--3122,
  2019.

\bibitem{srivastava2017family}
S.~K.~Upadhyay H.~M.~Srivastava and K.~Khatterwani.
\newblock A family of pseudo-differential operators on the schwartz space
  associated with the fractional {F}ourier transform.
\newblock {\em Russian Journal of Mathematical Physics}, 24(4):534--543, 2017.

\bibitem{shi2012novel}
N.~T.~Zhang J.~Shi and X.~P. Liu.
\newblock A novel fractional wavelet transform and its applications.
\newblock {\em Science China Information Sciences}, 55(6):1270--1279, 2012.

\bibitem{upadhyayCFrWT}
K.~Khatterwani and S.~K. Upadhyay.
\newblock Continuous fractional wavelet transform.
\newblock {\em J. Int. Acad. Phys. Sci}, 21(1):55--61, 2018.

\bibitem{mcbride1987namias}
A.C. McBride and F.H. Kerr.
\newblock On {N}amias's fractional {F}ourier transforms.
\newblock {\em IMA Journal of applied mathematics}, 39(2):159--175, 1987.

\bibitem{meyer1992wavelets}
Y.~Meyer.
\newblock {\em Wavelets and operators}, volume~1.
\newblock Cambridge university press, 1992.

\bibitem{namias1980fractional}
V.~Namias.
\newblock The fractional order {F}ourier transform and its application to
  quantum mechanics.
\newblock {\em IMA Journal of Applied Mathematics}, 25(3):241--265, 1980.

\bibitem{pathak2004wavelet}
R.~S. Pathak.
\newblock The wavelet transform of distributions.
\newblock {\em Tohoku Mathematical Journal, Second Series}, 56(3):411--421,
  2004.

\bibitem{pathak2006wavelet}
R.~S. Pathak and S.~K. Singh.
\newblock The wavelet transform on spaces of type {S}.
\newblock {\em Proceedings of the Royal Society of Edinburgh Section A:
  Mathematics}, 136(4):837--850, 2006.

\bibitem{pathak2007boundedness}
R.~S. Pathak and S.~K. Singh.
\newblock Boundedness of the wavelet transform in certain function spaces.
\newblock {\em J. Inequal. Pure Appl. Math}, 8(1):8, 2007.

\bibitem{prasad2015continuous}
A.~Prasad and P.~Kumar.
\newblock The continuous fractional wavelet transform on generalized weighted
  {S}obolev spaces.
\newblock {\em Asian-European Journal of Mathematics}, 8(03):1550054, 2015.

\bibitem{prasad2016continuous}
A.~Prasad and P.~Kumar.
\newblock The continuous fractional wavelet transform on a generalized
  {S}obolev space.
\newblock {\em International Journal of Wavelets, Multiresolution and
  Information Processing}, 14(06):1650046, 2016.

\bibitem{prasad2016fractional}
A.~Prasad and P.~Kumar.
\newblock Fractional continuous wavelet transform on some function spaces.
\newblock {\em Proceedings of the National Academy of Sciences, India Section
  A: Physical Sciences}, 86(1):57--64, 2016.

\bibitem{prasad2012fractional}
A.~Prasad and A.~Mahato.
\newblock The fractional wavelet transform on spaces of type {S}.
\newblock {\em Integral Transforms and Special Functions}, 23(4):237--249,
  2012.

\bibitem{prasad2013fractional}
A.~Prasad and A.~Mahato.
\newblock The fractional wavelet transform on spaces of type {W}.
\newblock {\em Integral Transforms and Special Functions}, 24(3):239--250,
  2013.

\bibitem{rieder1990wavelet}
A.~Rieder.
\newblock The wavelet transform on {S}obolev spaces and its approximation
  properties.
\newblock {\em Numerische Mathematik}, 58(1):875--894, 1990.

\bibitem{pilipovic2016wavelet}
N.~Teofanov S.~Pilipovi{\'c}, D.~Raki{\'c} and J.~Vindas.
\newblock The wavelet transforms in {G}elfand--{S}hilov spaces.
\newblock {\em Collectanea Mathematica}, 67(3):443--460, 2016.

\bibitem{wei2014generalized}
D.~Wei and Y-M. Li.
\newblock Generalized wavelet transform based on the convolution operator in
  the linear canonical transform domain.
\newblock {\em Optik-International Journal for Light and Electron Optics},
  125(16):4491--4496, 2014.

\bibitem{luchko2008fractional}
H.~Martinez Y.~F.~Luchko and J.~J. Trujillo.
\newblock Fractional fourier transform and some of its applications.
\newblock {\em Fract. Calc. Appl. Anal}, 11(4):457--470, 2008.

\end{thebibliography}
\bibliographystyle{plain}
\end{document}